\newcommand{\Rb}{\mathbbm{R}}      
\newcommand{\Bc}{\mathcal{B}}
\newcommand{\Dc}{\mathcal{D}}
\newcommand{\Qc}{\mathcal{Q}}
\newcommand{\Bb}{\mathbbm{B}}
\newcommand{\Eb}{\mathbbm{E}}
\newcommand{\Fc}{\mathcal{F}}
\newcommand{\Lc}{\mathcal{L}}
\newcommand{\Mc}{\mathcal{M}}
\newcommand{\Pc}{\mathcal{P}}
\newcommand{\Uc}{\mathcal{U}}
\newcommand{\Xc}{\mathcal{X}}
\newcommand{\Yc}{\mathcal{Y}}
\newcommand{\Wc}{\mathcal{W}}
\newcommand{\Zc}{\mathcal{Z}}
\newcommand{\1}{\mathbbm{1}}
\newcommand{\avar}{\text{AVaR}}
\newcommand{\supp}{\mathop{\rm supp}}
\newtheorem{proposition}{Proposition}[section]
\newtheorem{theorem}[proposition]{Theorem}
\newtheorem{corollary}[proposition]{Corollary}
\newtheorem{lemma}[proposition]{Lemma}
\newtheorem{definition}[proposition]{Definition}
\newtheorem{example}[proposition]{Example}
\newcommand{\prefeq}{\mathrel{\unlhd}}
\newcommand{\pref}{\mathrel{\lhd}}
\newcommand\smallmath[2]{#1{\raisebox{\dimexpr \fontdimen 22 \textfont 2
      - \fontdimen 22 \scriptfont 2 \relax}{$\scriptstyle #2$}}}
\newcommand\smallotimes{\smallmath\mathbin\otimes}
\newenvironment{tightlist}[1]{%
    \list{{\textup{(\roman{enumi})}}}{\settowidth\labelwidth{{\textup{(#1)}}}
    \leftmargin -6pt \advance\leftmargin\labelsep \itemindent \parindent
    \parsep 0pt plus 1pt minus 1pt \topsep 0pt \itemsep 0pt
    \usecounter{enumi}}}{\endlist}
\begin{document}

\title{Risk Forms: Representation,  Disintegration, and Application to Partially Observable Two-Stage Systems
}


\author{Darinka Dentcheva\footnote{Stevens Institute of Technology, Department of Mathematical Sciences, Hoboken, NJ 07030, USA}
\ and
        Andrzej Ruszczy\'nski\footnote{Rutgers University, Department of Management Science and Information Systems, Piscataway, NJ 08854, USA}  
}


%

\maketitle

\begin{abstract}
We introduce the concept of a risk form, which is a real functional of two arguments: a measurable function on a Polish space and a measure on that space. We generalize the duality theory and the Kusuoka representation to this setting. For a risk form acting on a product of Polish spaces, we define marginal and conditional forms and we prove a disintegration formula, which represents a risk form as a composition of its marginal and conditional forms. We apply the proposed approach to two-stage stochastic programming problems with partial information and decision-dependent observation distribution.

\textbf{Keywords:} {Risk Measures, Kusuoka Representation, Risk Decomposition, Two-Stage Stochastic Programming, Partially Observable Systems}
\end{abstract}

\section{Introduction}

The theory of risk measures is one of the main directions of {recent developments} in stochastic optimization. It has found multitude of applications,
 far beyond the original motivation in finance. The main setting is the following: a probability space
$(\varOmega,\Fc,P)$ is given and a space $\Zc$ of real-valued measurable functions on $\varOmega$ is defined (usually, $\Lc_p(\varOmega,\Fc,P)$ with
$p\in [1,\infty]$). A (convex) risk measure is a convex, monotonic, and translation-equivariant functional $\rho:\Zc \to \overline{\Rb}$.
We refer to  \cite{KijOhn:1993}, \cite{ogryczak1999stochastic}, \cite{ADEH:1999}, and  \cite{FolSch:2002} for initial contributions,
and to \cite{Follmer-Schied-book}, \cite{RuSh:2006a}, \cite{PflRom:07}, \cite{DPR-2009},
\cite{pflug2016multistage}  and to the survey \cite{bielecki2017survey}
for detailed presentation, applications, and further references.

Two key results provide variational representation of risk measures. One of them, called \emph{dual representation}, can be derived from the theory of conjugate duality, as shown in \cite{RuSh:2006a}. Another representation is known as  \emph{Kusuoka representation} of law invariant coherent measures of risk \cite{kus:01}. It is derived from the dual representation by employing the Hardy-Littlewood-P\'{o}lya inequality (see \cite{HLP}) under several assumptions about the properties of the measures of risk.
In all these developments, the original probability measure $P$ is assumed fixed, which is essential for the use of convex
analysis techniques in the spaces of integrable functions.

We propose a different approach. We fix a  Polish space $\Xc$ with its Borel $\sigma$-algebra $\Bc(\Xc)$, but we allow
arbitrary probability measures on this space. In Section \ref{s:risk-forms}, we introduce real-valued functionals of two arguments,
$\rho[Z,P]$, where $Z$ is a bounded measurable function on $\Xc$ and $P$ is a probability measure on $(\Xc,\Bc(\Xc))$.
In analogy to the bilinear form $\Eb[Z,P] = \int_\Xc Z(x)\,P(dx)$, we call $\rho[Z,P]$ a \emph{risk form}.
Transition risk mappings, which arose in our recent research on risk-averse control \cite{dentcheva2018time,FanRusz-2018,fan2018risk,ruszczynski2010risk}, are special cases of risk forms.

Under less restrictive assumptions
 than in the fixed probability measure case, we prove  a generalized Kusuoka representation
of risk forms in Section \ref{s:Kusuoka}. We establish the universal character of the risk representation; it remains valid for all probability measures.

The second contribution of the paper is the risk disintegration formula and its implications.
In Section \ref{s:disintegration}, 
we introduce the property of conditional consistency of risk forms.  We prove that forms enjoying this property can be represented as compositions of two forms, which we call
marginal and conditional forms. This result generalizes the decomposition of the bilinear form, resulting from the disintegration of probability measures.
While our approach is related to the theory of dynamic and conditional risk measures (see
 \cite{Scandolo:2003,Riedel:2004,roorda2005coherent,follmer2006convex,CDK:2006,RuSh:2006b,ADEHK:2007,PflRom:07,KloSch:2008,jobert2008valuations,cheridito2011composition}
 and the references therein), it allows for variable probability measures and does not have
any time structure associated with it; the order of conditioning may be arbitrary. These results are generalized in
Section \ref{s:composite}, where we consider multi-step disintegration and prove the generalized tower property of
conditional risk forms, {as a counterpart of the tower property of conditional expectations.}

Our final contribution is the application of the risk form theory to two-stage risk-averse optimization
of models with partial observation (Section \ref{s:two-stage-PO}).
Opposite to classical two-stage models, we assume that only partial information is available
at the second stage, which allows for the update of the conditional distribution of the unobserved part.
This setting was first considered in \cite{miller2011risk} and \cite{gulten2015two}, in a special case, and with
a postulated structure of the overall measure of risk. We generalize and justify the earlier contributions,
by proving the equivalence of the overall risk optimization and two-stage optimization in this setting.
We also allow for decision-dependent observation distribution and develop a risk-averse Bayes formula. In
the risk-neutral case, stochastic programming models with \emph{endogeneous} (decision-dependent) uncertainty have been discussed in \cite{jonsbraaten1998class},
 where the probability distribution and the first stage decision are linked by a special constraint.

\section{Risk Models with Variable Probability Measures}
\label{s:risk-forms}

 Let $\Pc(\Xc)$ be the set of probability measures on
 $\big(\Xc , \Bc(\Xc) \big)$.
The space  of all real-valued bounded measurable functions on $\Xc$ is denoted by $\Bb(\Xc)$. We use $x$ to denote an element of $\Xc$ and
$\delta_x$ to denote the Dirac measure concentrated at $x$. The symbol $\1$ stands for the function in $\Bb(\Xc)$,  which is constantly equal to 1.

A \emph{probabilistic model} is a pair $[Z,P] \in \Bb(\Xc) \times \Pc(\Xc)$ { Here and elsewhere, the Borel $\sigma$-algebra $\Bc(\Xc)$ is considered.}
{ For two probabilistic models  $[V,P]$ and $[W,Q]$ the notation $[V,P]{\sim} [W,Q]$ means that $P\{ V \le \eta\} = Q\{ W \le \eta\}$ for all $\eta\in \Rb$ (both models have the same distribution).}

Our goal is to propose a universal approach to risk evaluation of a family of probabilistic models.

\begin{definition}
\label{d:risk}
A measurable functional $\rho : \Bb(\Xc) \times \Pc(\Xc) \to \Rb$ is called a \emph{risk form}.
\begin{tightlist}{(iii)}
\item It is \emph{monotonic}, if $V \le W$ implies $\rho[V,P] \le \rho[W,P]$ for all $P\in \Pc(\Xc)$;
\item It is \emph{normalized} if $\rho[0,P]=0$ for all $P\in \Pc(\Xc)$;
\item It is \emph{translation equivariant} if for all $V \in \Bb(\Xc)$, all $a\in \Rb$, and all $P\in \Pc(\Xc)$, 	
$\rho[a\1+V,P]=a + \rho[V,P]$;
\item It is \emph{positively homogeneous}, if for all $V \in \Bb(\Xc)$, all $\beta\in \Rb_+$, and all $P\in \Pc(\Xc)$,
$\rho[\beta V,P] = \beta\rho[V,P]$;
\item It is \emph{law invariant}  if
	{ $[V,P] {\sim} [W,Q]$ implies that $\rho[V,P] = \rho[W,Q]$};
\item It has the \emph{support property}, if $\rho\big[\1_{\supp(P)}V,P\big] = \rho[V,P]$ for all $(V,P) \in \Bb(\Xc) \times \Pc(\Xc)$.

\end{tightlist}
\end{definition}

An example of a risk form is the expected value, which is a well-understood bilinear form:
\[
\Eb[Z,P] = \int_\Xc Z(x)\;P(dx).
\]
In our analysis, we are interested mainly in forms depending on one or both arguments in a nonlinear way.


Our concept of law invariance is broader than that used in the literature, because it allows for the probability measure
to vary. If the risk form is law invariant, then it has the support property, because $[V,P] {\sim}[\1_{\supp(P)}V,P] $.
\begin{lemma}
\label{l:state-consistency}
If a risk form $\rho : \Bb(\Xc) \times \Pc(\Xc) \to \Rb$ has the normalization, translation equivariance, and support properties then for every $Z\in \Bb(\Xc)$ and every $x\in \Xc$
\begin{equation}
\label{state-consistency}
\rho\big[Z,\delta_{x}\big] = Z(x).
\end{equation}
\end{lemma}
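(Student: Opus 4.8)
The plan is to evaluate $\rho[Z,\delta_x]$ by sandwiching $Z$ between two constant functions that agree with $Z$ on $\supp(\delta_x)=\{x\}$, and then invoke the three postulated properties in turn. First I would observe that $\supp(\delta_x)=\{x\}$, so the indicator $\1_{\{x\}}$ satisfies $\1_{\{x\}}Z = Z(x)\1_{\{x\}}$ pointwise; by the support property, $\rho[Z,\delta_x] = \rho[\1_{\{x\}}Z,\delta_x] = \rho[Z(x)\1_{\{x\}},\delta_x]$. This reduces the task to computing $\rho$ on a scalar multiple of the indicator of $\{x\}$ against $\delta_x$.

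Next I would relate $Z(x)\1_{\{x\}}$ back to the constant function $Z(x)\1$. The cleanest route is monotonicity combined with the support property once more: we have $0 \le Z(x)\1_{\{x\}} \le \|Z\|_\infty \1$ is too crude, so instead I would bound using constants that touch the right value. Concretely, set $c = Z(x)$ and note $c\1_{\{x\}} \le c\1$ when $c\ge 0$, giving $\rho[c\1_{\{x\}},\delta_x]\le \rho[c\1,\delta_x]$ by monotonicity; and $c\1_{\{x\}} \ge c\1 - |c|\1_{\Xc\setminus\{x\}}$, but a slicker argument avoids the two-sided estimate: apply the support property to the constant function $c\1$ itself, since $\1_{\supp(\delta_x)}(c\1) = c\1_{\{x\}}$, so in fact $\rho[c\1,\delta_x] = \rho[c\1_{\{x\}},\delta_x]$ directly. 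Combining this with the first reduction yields $\rho[Z,\delta_x] = \rho[Z(x)\1,\delta_x]$.

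Finally I would apply translation equivariance with $V = 0$ and $a = Z(x)$, together with normalization: $\rho[Z(x)\1,\delta_x] = \rho[Z(x)\1 + 0,\delta_x] = Z(x) + \rho[0,\delta_x] = Z(x) + 0 = Z(x)$. Chaining the three displayed equalities gives $\rho[Z,\delta_x] = Z(x)$, which is \eqref{state-consistency}.

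The only subtle point — and the one I would be most careful about — is the legitimacy of applying the support property to $c\1$ to conclude $\rho[c\1,\delta_x]=\rho[c\1_{\{x\}},\delta_x]$: this is exactly the statement $\rho[\1_{\supp(P)}V,P]=\rho[V,P]$ with $V=c\1$ and $P=\delta_x$, so it is immediate, but one must remember that $\{x\}$ is Borel (a singleton in a Polish space is closed), so $\1_{\{x\}}\in\Bb(\Xc)$ and the expression is well-defined. Everything else is a direct substitution into the definitions, so I do not anticipate any real obstacle.
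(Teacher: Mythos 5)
Your proof is correct and is essentially identical to the paper's: both apply the support property twice (once to $Z$ and once to the constant function $Z(x)\1$, linked by the pointwise identity $\1_{\{x\}}Z = Z(x)\1_{\{x\}} = \1_{\{x\}}\big(Z(x)\1\big)$), then finish with translation equivariance and normalization. The monotonicity-based sandwich you briefly entertain is rightly discarded — monotonicity is not among the lemma's hypotheses — and the ``slicker argument'' you settle on is exactly the paper's chain of equalities.
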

\begin{proof}
Using the support property twice, the translation property, and the normalization property, we obtain the chain of equations:
\[
\rho\big[Z,\delta_{x}\big] =  \rho\big[\1_{x}Z,\delta_{x}\big]
= \rho\big[Z(x)\1,\delta_{x}\big]=              Z(x) + \rho[0,\delta_{x}] = Z(x).
\]
\qed
\end{proof}
This property was called \emph{state-consistency} in \cite{dentcheva2017risk}.

Essential role in our analysis will be played by the increasing convex order { (the counterpart of the second order stochastic dominance, when
smaller outcomes are preferred)}.
\begin{definition}
\label{d:icx}
A probabilistic model $[Z,P]$ is smaller than a probabilistic model $[Z',P']$ in the \emph{increasing convex order}, written
$[Z,P] \preceq  [Z',P']$, if
for all $\eta\in \Rb$
\[
\int_\Xc \big[Z(x) - \eta\big]_+ \,P(dx) \le \int_\Xc \big[Z'(x) - \eta\big]_+ \,P'(dx).
\]
Here, $[a]_+ = \max(0,a)$.
\end{definition}
This concept allows us to consider risk forms consistent with the increasing order.
\begin{definition}
\label{d:consistency-icx}
A risk form $\rho : \Bb(\Xc) \times \Pc(\Xc) \to \Rb$ is \emph{consistent with the increasing convex order}, if
\[
[Z,P] \preceq  [Z',P']\ \Longrightarrow \ \rho[Z,P] \le \rho[Z',P'].
\]
\end{definition}
Evidently, consistency with the increasing convex order implies monotonicity and law invariance.

We call two functions $Z,V\in \Bb(\Xc)$ \emph{comonotonic}, if
\[
\big(Z(x')-Z(x)\big)\big(V(x') - V(x)\big) \ge 0,\quad \forall\,x,x'\in \Xc.
\]
\begin{definition}
A risk form $\rho : \Bb(\Xc) \times \Pc(\Xc) \to \Rb$ is \emph{comonotonically convex}, if for all comonotonic functions $Z,V\in \Bb(\Xc)$, all $P\in \Pc(\Xc)$, and all $\lambda\in [0,1]$,
\[
\rho[\lambda Z + (1-\lambda) V, P] \le \lambda \rho[Z,P]+(1-\lambda)\rho[V,P].
\]
\end{definition}

\section{Dual and Kusuoka Representations}
\label{s:Kusuoka}

In this section, we generalize the dual representation and the Kusuoka representation of law invariant coherent risk measures \cite{kus:01,RuSh:2006a} to risk forms. In the extant literature, these representations is always derived under the assumption that the probability measure is fixed (see, \emph{e.g.}, \cite{FrittelliRosazza-2005,JST-2006,PflRom:07}). We show that a more general result using variable probability measures is true.

With every probabilistic model $[Z,P]$, we associate its distribution function,
\[
F{[Z,P]}(z) = P[Z \le z], \quad z \in \Rb,
\]
and its quantile function,
\[
\varPhi{[Z,P]}(p) = \inf\big\{\eta: P[Z \le \eta] \ge p\big\}, \quad p\in (0,1].
\]
The quantile functions are elements of the space $\mathbb{Q}_{\text{b}}$ of bounded, nondecreasing, and left-continuous functions on $(0,1]$.

We first adapt the general duality result of \cite{dentcheva2014risk} for risk models on the space of quantile functions.
We denote by $\Mc$ the set of countably additive finite measures on $(0,1]$. 
For every risk form $\rho : \Bb(\Xc) \times \Pc(\Xc) \to \Rb$, we
define the conjugate functional $\rho^*:\Mc\to \Rb\cup\{+\infty\}$ as follows:
\begin{equation}
\label{conjugate}
\rho^*(\mu) = \sup_{[Z,P]\in \Bb(\Xc) \times \Pc(\Xc)} \bigg\{ \int_0^1 \varPhi[Z,P](p)\;\mu(dp) - \rho[Z,P] \bigg\}.
\end{equation}

We recall that a \emph{total preorder} $\prefeq$ on the space $\mathbb{Q}_{\text{b}}$ is a binary relation, which is reflexive, transitive and complete.
It is \emph{directed} if it satisfies the following conditions:
\begin{tightlist}{(iii)}
\item
For any real numbers $\alpha < \beta$, the relation $\alpha\1 \pref \beta \1$ is true;
\item
For every $\varPsi\in \mathbb{Q}_{\text{b}}$,  numbers $\alpha$ and $\beta$ exist such that $\alpha\1 \prefeq \varPsi \prefeq \beta\1$.
\end{tightlist}
In \cite{dentcheva2014risk}, we introduced the following properties of preorders:
\begin{description}
\item[\emph{Dual Translation:}] For all $\varPsi_1$ and $\varPsi_2$ in  $\mathbb{Q}_{\text{b}}$ and all $c\in\Rb$
\[
\varPsi_1 \prefeq \varPsi_2 \;\Longrightarrow\; \varPsi_1 + c\1 \prefeq \varPsi_2 +c\1.
\]
\item[\emph{Dual Monotonicity:}] For all $\varPsi_1$ and $\varPsi_2$ in  $\mathbb{Q}_{\text{b}}$
\[
\varPsi_1 \le \varPsi_2\ \text{pointwise} \;\Longrightarrow \; \varPsi_1  \prefeq \varPsi_2 .
\]
\end{description}

We obtain the following dual representation of risk forms.
\begin{theorem}
\label{t:dual}
{ If the space $\Xc$ is uncountable} and a risk form $\rho : \Bb(\Xc) \times \Pc(\Xc) \to \Rb$ is normalized, translation equivariant, comonotonically convex,
and consistent with the increasing convex order, then a uniquely defined closed convex set
\begin{equation}
\label{D-def}
\mathcal{D}_\rho \subseteq \big\{ \mu\in \Mc: \mu(0,\cdot]\textup{ is nondecreasing and convex on } (0,1],\ \mu(0,1] = 1\big\}
\end{equation}
exists, such that for all $[Z,P] \in  \Bb(\Xc) \times \Pc(\Xc)$
\begin{equation}
\label{conjugate-duality-averse}
\rho[Z,P] = \sup_{\mu\in\mathcal{D}_\rho}\bigg\{ \int_0^1 \varPhi[Z,P](p)\;\mu(dp) - \rho^*(\mu)\bigg\}.
\end{equation}
If, additionally, $\rho$ is positively homogeneneous, then $\rho^*(\mu)=0$ for
all $\mu\in \mathcal{D}_\rho$.
\end{theorem}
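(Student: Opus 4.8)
The plan is to reduce the statement to a duality theorem on the space $\mathbb{Q}_{\text{b}}$ of quantile functions and then to sharpen the resulting dual set so that it lies in the spectral class \eqref{D-def}. Since consistency with the increasing convex order implies law invariance, $\rho[Z,P]$ depends on $[Z,P]$ only through $\varPhi[Z,P]$; and because $\Xc$ is uncountable, it is Borel isomorphic to $[0,1]$, so pushing forward the Lebesgue measure realizes \emph{every} $\varPsi\in\mathbb{Q}_{\text{b}}$ as $\varPhi[Z,P]$ for a suitable probabilistic model. Hence $r(\varPsi):=\rho[Z,P]$, for any $[Z,P]$ with $\varPhi[Z,P]=\varPsi$, is a well-defined real functional on $\mathbb{Q}_{\text{b}}$, and $\rho^*$ in \eqref{conjugate} is exactly the conjugate of $r$, that is, $\rho^*(\mu)=\sup_{\varPsi\in\mathbb{Q}_{\text{b}}}\{\int_0^1\varPsi(p)\,\mu(dp)-r(\varPsi)\}$.

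I would then transfer the properties of $\rho$ to $r$. That $r(0)=0$ and $r(\varPsi+c\1)=c+r(\varPsi)$ is immediate; a pointwise inequality of quantile functions forces the increasing convex order of the underlying models, so $r$ is monotone for the pointwise order on $\mathbb{Q}_{\text{b}}$; consistency with the increasing convex order, via $\int_\Xc[Z-\eta]_+\,dP=\int_0^1[\varPhi[Z,P]-\eta]_+\,dp$, becomes consistency of $r$ with the order ``$\int_p^1\varPsi_1\le\int_p^1\varPsi_2$ for all $p$''. Crucially, \emph{comonotonic convexity of $\rho$ upgrades to ordinary convexity of $r$}, because any two elements of $\mathbb{Q}_{\text{b}}$ are nondecreasing functions on $(0,1]$, hence comonotonic, and the quantile function of a sum of comonotonic functions is the sum of the quantile functions. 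Thus the total preorder $\varPsi_1\prefeq\varPsi_2\iff r(\varPsi_1)\le r(\varPsi_2)$ is directed and has the Dual Translation and Dual Monotonicity properties, and $r$ is convex and, being finite, enjoys the regularity needed for duality.

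Next I would invoke the duality theorem of \cite{dentcheva2014risk} for $r$ to obtain a closed convex set $\Dc_\rho$ of measures with $r(\varPsi)=\sup_{\mu\in\Dc_\rho}\{\int_0^1\varPsi\,d\mu-\rho^*(\mu)\}$, which translates back to \eqref{conjugate-duality-averse}; the nonnegativity of the relevant $\mu$ and the constraint $\mu(0,1]=1$ come from Dual Monotonicity and Dual Translation via the standard argument that $\rho^*(\mu)=+\infty$ otherwise. It remains to show that $\Dc_\rho$ can be taken inside \eqref{D-def}, i.e. that only measures $\mu$ with $p\mapsto\mu(0,p]$ convex are needed. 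This is the crux, and it is handled, as in the classical Kusuoka theorem, through the Hardy--Littlewood--P\'{o}lya inequality: for $\varPsi\in\mathbb{Q}_{\text{b}}$ and the nondecreasing rearrangement $\mu^{\uparrow}$ of $\mu$ one has $\int_0^1\varPsi\,d\mu\le\int_0^1\varPsi\,d\mu^{\uparrow}$, and combined with the increasing-convex-order consistency of $r$ this lets one restrict the supremum defining $r$ to measures whose distribution function is convex; intersecting $\Dc_\rho$ with the closed convex set \eqref{D-def} and using that $\rho^*$ is determined by $\rho$ through \eqref{conjugate} yields both the inclusion and the uniqueness of $\Dc_\rho$, which is the closure of the effective domain of $\rho^*$ within \eqref{D-def}. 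I expect the rearrangement bookkeeping — making sure the penalty $\rho^*$ behaves correctly under the passage to spectral measures, so that a genuine closed convex dual set inside \eqref{D-def} does the job — to be the main technical obstacle.

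Finally, if $\rho$ is positively homogeneous then so is $r$, hence $r$ is sublinear; then $\langle\mu,\varPsi\rangle-r(\varPsi)$ is positively homogeneous in $\varPsi$, so its supremum over $\mathbb{Q}_{\text{b}}$ equals $0$ or $+\infty$. Therefore $\rho^*$ is the indicator function of $\Dc_\rho$, i.e. $\rho^*(\mu)=0$ for every $\mu\in\Dc_\rho$, and \eqref{conjugate-duality-averse} reduces to $\rho[Z,P]=\sup_{\mu\in\Dc_\rho}\int_0^1\varPhi[Z,P](p)\,\mu(dp)$.
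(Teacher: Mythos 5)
Your reduction is the same as the paper's: pass to the functional $R(\varPhi[Z,P])=\rho[Z,P]$ on $\mathbb{Q}_{\text{b}}$ (well defined by law invariance, surjective onto $\mathbb{Q}_{\text{b}}$ via the Borel isomorphism of the uncountable Polish space $\Xc$ with $[0,1]$), transfer normalization, translation equivariance, monotonicity, and convexity (the last via the observation that $\varPsi_1(Z_0(\cdot))$ and $\varPsi_2(Z_0(\cdot))$ are comonotonic, so comonotonic convexity of $\rho$ becomes genuine convexity of $R$), and invoke Theorem~4 of \cite{dentcheva2014risk}. Where you diverge is the step that forces the dual measures into the spectral class \eqref{D-def}. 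The paper does \emph{not} run a Hardy--Littlewood--P\'olya rearrangement argument: it verifies that the induced preorder is \emph{risk averse} in the sense of \cite[Def.~2]{dentcheva2014risk}, namely that replacing $\varPsi$ on an interval $(a,b]$ by its average $\varPsi_{(a,b]}$ does not increase $R$ --- a one-line consequence of consistency with the increasing convex order, since $\int_{1-\alpha}^1\varPsi_{(a,b]}\le\int_{1-\alpha}^1\varPsi$ for all $\alpha$ --- and then \cite[Th.~4]{dentcheva2014risk} already delivers a closed convex dual set satisfying \eqref{D-def}, together with its uniqueness. Your plan instead postpones the convexity of $\mu(0,\cdot]$ to a separate rearrangement step, which you yourself flag as the main obstacle and do not carry out.

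That unexecuted step is the one genuine weak point. On the quantile-function space the HLP bookkeeping is delicate in exactly the way you fear: the inequality $\int_0^1\varPsi\,d\mu\le\int_0^1\varPsi\,d\mu^{\uparrow}$ goes the right way in the representation of $r$, but it goes the \emph{wrong} way in the definition of the penalty, giving $\rho^*(\mu^{\uparrow})\ge\rho^*(\mu)$ rather than the inequality you would need to simply swap $\mu$ for $\mu^{\uparrow}$ inside the supremum; the classical fix takes the supremum over all rearrangements of the random variable, but rearrangements of a nondecreasing $\varPsi$ leave $\mathbb{Q}_{\text{b}}$, so one would have to descend back to probabilistic models $[Z,P]$ and use law invariance there. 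All of this is avoidable: the averaging condition $R(\varPsi_{(a,b]})\le R(\varPsi)$ is exactly the hypothesis under which the cited duality theorem already produces the set inside \eqref{D-def}. If you replace your rearrangement step by that verification, your argument coincides with the paper's; the remaining parts of your proposal, including the positively homogeneous case ($R$ sublinear, hence $R^*$ an indicator), are correct as written.
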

\begin{proof}
First, we show that the risk form $\rho[\cdot,\cdot]$ defines a functional $R$ on the space of quantile functions $\mathbb{Q}_{\text{b}}$ on $(0,1]$
by the identity:
\begin{equation}
\label{R-def}
 R\big( \varPhi{[Z,P]} \big) = \rho[Z,P] .
\end{equation}
Indeed, $\rho[\cdot,\cdot]$ is law invariant, due to its consistency with the increasing convex order. Therefore,
if $\varPhi{[Z,P]} = \varPhi{[Z',P']}$ then  $\rho[Z,P]=\rho[Z',P']$. Thus the functional $R$ is well-defined on the set of quantile functions $\big\{\varPhi{[Z,P]}: Z\in \Bb(\Xc),\ P\in \Pc(\Xc)\big\}$.

{ Since $\Xc$ is an uncountable Polish space, it is isomorphic to [0,1] equipped with the Borel $\sigma$-algebra (see, \emph{e.g.}, \cite[Th. 2.8 and Th. 2.12]{parthasarathy2005probability}).
Denote by $Z_0:\Xc \to [0,1]$ the said isomorphism. Since both $Z_0$ and $Z_0^{-1}$ are measurable, we can define a probability measure $P_0$ on $\Bc(\Xc)$ by $\lambda\circ Z_0$, where $\lambda$ is the Lebesgue measure on [0,1].
By construction, $P_0\big[ Z_0 \le p\big] = p$ and thus
$\varPhi{[Z_0,P_0]}(p) = p$, for all $p\in [0,1]$.
 Moreover, for every function  $\varPsi\in \mathbb{Q}_{\text{b}}$, we can define
$Z(x) = \varPsi(Z_0(x))$, $x\in \Xc$,  and  $\varPsi^{-1}(z) = \sup\, \{ p\in [0 ,1] : \varPsi(p) \le z \}$.
Then, for every $z \in \Rb$,
\begin{equation}
\label{Psi-inverse}
P_0\big\{x: Z(x) \le z\big\} = P_0\big\{x: \varPsi(Z_0(x)) \le z\big\} =
P_0\big\{x: Z_0(x) \le \varPsi^{-1}(z)\big\} = \varPsi^{-1}(z).
\end{equation}
Consequently, the distribution function of $Z$ under $P_0$ is the inverse of $\varPsi$, and thus $\varPhi{[Z,P_0]} = \varPsi$.
This means that the domain of $R$ is the entire space $\mathbb{Q}_{\text{b}}$.}

We verify the assumptions of Theorem 4 of \cite{dentcheva2014risk}. We define a preference relation $\prefeq$ on $\mathbb{Q}_{\text{b}}$ by setting
\[
\varPsi_1 \prefeq \varPsi_2\; \text{ if and only if }\; R(\varPsi_1 ) \leq R(\varPsi_2).
\]
Clearly, the relation $\prefeq$ is a total preorder with $R$ being its numerical representation.
Since $\rho[\cdot,\cdot]$ is normalized and translation equivariant,
the identity \eqref{R-def} implies that $R$ is normalized and translation equivariant as well.
We observe that $R$ is monotonic, i.e., if $\varPsi_1 \leq \varPsi_2$ (pointwise), then $R(\varPsi_1 ) \leq R(\varPsi_2)$.
Indeed, let  $\varPsi_1 \leq \varPsi_2$  and set $Z_1(x)= \varPsi_1 (Z_0(x)) $,  $Z_2(x)= \varPsi_2 (Z_0(x)) $ for all $x\in \Xc$.
Similar to \eqref{Psi-inverse},
\[
P_0 ( Z_1\leq z) = \varPsi_1^{-1}(z) \ge  \varPsi_2^{-1}(z) = P_0 (Z_2\le z)
\]
 for all $z\in\Rb$. The last relation implies that $[Z_1,P_0] \preceq [Z_2,P_0]$.
 The consistency of $\rho$ with the increasing convex order entails
\[
R\big( \varPsi_1 \big) = \rho[Z_1,P_0] \le \rho[Z_2,P_0] = R\big( \varPsi_2 \big),
\]
which is the desired monotonicity.
The properties of $R$ further imply that the order $\prefeq$ is directed, monotonic, and satisfies the dual translation property.

For any two comonotonic functions $Z_1$ and $Z_2$ in $\Bb(\Xc)$, any $\lambda\in [0,1]$,
and any $P\in \Pc(\Xc)$,
\[
\varPhi\big[\lambda Z_1 + (1-\lambda)Z_2, P \big] = \lambda \varPhi[Z_1,P] + (1-\lambda) \varPhi[Z_2,P].
\]
The comonotonic convexity assumption implies that
\begin{multline*}
R\big( \lambda \varPhi[Z_1,P] + (1-\lambda) \varPhi[Z_2,P] \big) =  \rho\big[ \lambda Z_1 + (1-\lambda)Z_2,P\big] \\
\le \lambda \rho[Z_1,P] + (1-\lambda) \rho[Z_2,P] = \lambda R\big( \varPhi[Z_1,P]\big) + (1-\lambda)  R\big( \varPhi[Z_2,P]\big).
\end{multline*}
Since any two functions $\varPsi_1,\varPsi_2\in \mathbb{Q}_{\text{b}}$ can be represented as
\begin{align*}
\varPsi_1 &= \varPhi\big[Z_1,P_0\big],\quad \text{with}\quad Z_1(x) = \varPsi_1(Z_0(x)), \ x\in \Xc,\\
\varPsi_2 &= \varPhi\big[Z_2,P_0\big],\quad \text{with}\quad Z_2(x) = \varPsi_2(Z_0(x)), \ x\in \Xc,
\end{align*}
and the functions $Z_1$ and $Z_2$ are comonotonic by construction, the functional $R$
is convex.

Consider a function $\varPsi\in \mathbb{Q}_{\text{b}}$. For $(a,b]\subset (0,1]$ we define
\begin{equation}
\label{quantile-cond}
\varPsi_{(a,b]}(p) = \begin{cases}
\displaystyle{\frac{1}{b-a} \int_a^b \varPsi(s)\;ds} & \text{if } p\in (a,b], \\
\varPsi(p) & \text{otherwise}.
\end{cases}
\end{equation}
Directly from \eqref{quantile-cond} we observe that for every $\alpha \in (0,1]$
\[
\int_{1-\alpha}^1 \varPsi_{(a,b]}(p)\;dp  \le \int_{1-\alpha}^1 \varPsi(p)\; dp.
\]
Therefore, for any $[Z,P]$ and $[V,Q]$ such that $\varPsi_{(a,b]} =  \varPhi[Z,P]$ and
$\varPsi = \varPhi[V,Q]$, we have $[Z,P] \preceq  [V,Q]$. Due to the consistency of $\rho[\cdot,\cdot]$
with the increasing convex order,
\[
R\big( \varPsi_{(a,b]} \big) = \rho[Z,P] \le \rho[V,Q] = R\big( \varPsi \big).
\]
Therefore, the preorder $\prefeq$ is risk averse in the sense of \cite[Def. 2]{dentcheva2014risk}.
It follows from \cite[Th. 4]{dentcheva2014risk} that a set $\Dc_{\rho}$ satisfying \eqref{D-def} exists,
such that
\begin{align*}
R(\varPsi) &= \sup_{\mu\in\mathcal{D}_{\rho}}\bigg\{ \int_0^1 \varPsi(p)\;\mu(dp) - R^*(\mu)\bigg\},\\
\intertext{with}
R^*(\mu)&=\sup_{\varPsi\in \mathbb{Q}_{\text{b}}} \bigg\{ \int_0^1 \varPsi(p)\;\mu(dp) - R(\varPsi) \bigg\}.
\end{align*}
Moreover, $R^*(\mu) = 0 $ for $\mu \in \Dc_{\rho}$, if $R$ is positively homogeneous. The assertion of the theorem
follows now from
 the substitution \eqref{R-def}.
 \qed
\end{proof}

{
The dual representation \eqref{conjugate-duality-averse} can be written with the use of the Stjelties integral with respect to the
distribution function $w(\cdot)= \mu(0,\cdot]$. It is particularly revealing in the homogeneous case:
\begin{equation}
\label{conjugate-duality-averse-2}
\rho[Z,P] = \sup_{w\in\Wc_\rho} \int_0^1 \varPhi[Z,P](p)\;dw(p),
\end{equation}
where each $w(\cdot)\in \Wc_{\rho}$ can be interpreted as a dual (rank dependent) utility function. The set $\Wc_{\rho}$ is
the set of distribution functions of measures $\mu\in \Dc_{\rho}$: a convex subset of convex and nondecreasing functions from $[0,1]$ to $[0,1]$.
The relation \eqref{conjugate-duality-averse-2} suggests an intriguing relation of law invariant risk forms and the dual utility theory
of \cite{Quiggin:1982} and \cite{Yaari:1987}, as analyzed in \cite{dentcheva2013common}.
}

Theorem \ref{t:dual} allows us to derive a generalization of the celebrated Kusuoka
representation of law invariant coherent measures of risk
(see \cite{kus:01}, \cite{FrittelliRosazza-2005}, and \cite[sec. 2.2.4]{PflRom:07} for an overview of relevant results).
\begin{definition}
The Average Value at Risk at level $\alpha\in [0,1]$ of a probabilistic model $[Z,P]$ is defined as follows:
\[
\textup{AVaR}_\alpha[Z,P] = \begin{cases}
\frac{1}{\alpha}\int_{1-\alpha}^1 \varPhi[Z,P](p)\;dp &\ \text{if } \alpha \in (0,1),\\
\varPhi[Z,P](1) &\ \text{if } \alpha = 0,\\
\Eb[Z,P]  &\ \text{if } \alpha = 1.
\end{cases}
\]
\end{definition}
Then, repeating the considerations leading to \cite[Cor. 1]{dentcheva2014risk} \emph{verbatim}, we obtain
the following result.

\begin{corollary}
\label{c:Kusuoka}
Suppose the conditions of Theorem \ref{t:dual} are satisfied and the risk form $\rho[\cdot,\cdot]$ is positively homogeneous. Then
a convex subset { $\varLambda_\rho$} of the set of probability measures on $[0,1]$ exists, such that
for all $[Z,P]\in \Bb(\Xc)\times \Pc(\Xc)$
\begin{equation}
\label{Kusuoka-full}
\rho[Z,P] = \sup_{\lambda\in \varLambda_\rho} \int_0^1 \textup{AVaR}_s[Z,P] \;\lambda(ds).
\end{equation}
\end{corollary}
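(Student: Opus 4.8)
The plan is to obtain \eqref{Kusuoka-full} from the homogeneous dual representation \eqref{conjugate-duality-averse-2}, which is already furnished by Theorem~\ref{t:dual}. In the positively homogeneous case that theorem gives $\rho[Z,P]=\sup_{w\in\Wc_\rho}\int_0^1\varPhi[Z,P](p)\,dw(p)$, where $\Wc_\rho$ is a convex family of nondecreasing convex functions $w:[0,1]\to[0,1]$ with $w(0)=0$ and $w(1)=1$ (the distribution functions of the measures in $\Dc_\rho$, and convex because $\mu\mapsto\mu(0,\cdot]$ is affine). Since $\Xc$ and $P$ now enter the right-hand side only through the bounded quantile function $\varPhi[Z,P]\in\mathbb{Q}_{\text{b}}$, everything that follows is purely a statement about distribution functions on $(0,1]$; this is exactly why the argument leading to \cite[Cor.~1]{dentcheva2014risk} carries over \emph{verbatim}, and I only recall its skeleton here.

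First I would record the elementary identities linking a Stieltjes integral against a distribution function to the Average Value at Risk. For $s\in(0,1)$ set $w_s(p)=\frac1s\big(p-(1-s)\big)_+$; then $dw_s$ equals $\frac1s$ times Lebesgue measure on $(1-s,1)$, so $\int_0^1\varPhi[Z,P](p)\,dw_s(p)=\frac1s\int_{1-s}^1\varPhi[Z,P](p)\,dp=\textup{AVaR}_s[Z,P]$. For $s=1$ take $w_1(p)=p$, giving $\Eb[Z,P]=\textup{AVaR}_1[Z,P]$, and for $s=0$ take $w_0=\1_{\{1\}}$, giving $\varPhi[Z,P](1)=\textup{AVaR}_0[Z,P]$. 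These functions $w_s$, $s\in[0,1]$, are precisely the extreme points of the set of nondecreasing convex functions $[0,1]\to[0,1]$ with $w(0)=0$ and $w(1)=1$.

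The crux is to write an arbitrary $w\in\Wc_\rho$ as a mixture of the elementary $w_s$. Let $g$ be the right-derivative of $w$ on $(0,1)$, which is nonnegative, nondecreasing and right-continuous because $w$ is convex and nondecreasing. Define a measure $\lambda_w$ on $[0,1]$ by $\lambda_w(\{0\})=w(1)-w(1^-)$, $\lambda_w(\{1\})=g(0^+)$, and, on $(0,1)$, $\lambda_w(ds)=s\,dg_\ast(s)$ where $g_\ast(s)=-g(1-s)$ is nondecreasing, so $dg_\ast\ge0$. A short computation shows that $\lambda_w$ is nonnegative, that $\lambda_w([0,1])=\big(w(1)-w(1^-)\big)+w(1^-)=w(1)=1$, and that $w(p)=\int_{[0,1]}w_s(p)\,\lambda_w(ds)$ for every $p\in[0,1]$ — both sides are convex on $(0,1)$ with the same right-derivative and with matching values at the endpoints $p=0$ and $p=1$. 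Consequently $dw=\int_{[0,1]}dw_s\,\lambda_w(ds)$, and interchanging the order of integration, legitimate since $\varPhi[Z,P]$ is bounded (as $Z\in\Bb(\Xc)$) and the measures $dw_s$ carry uniformly bounded mass, yields $\int_0^1\varPhi[Z,P](p)\,dw(p)=\int_0^1\textup{AVaR}_s[Z,P]\,\lambda_w(ds)$ for all $[Z,P]\in\Bb(\Xc)\times\Pc(\Xc)$.

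Finally I would set $\varLambda_\rho=\{\lambda_w:w\in\Wc_\rho\}$. The map $w\mapsto\lambda_w$ is affine, so $\varLambda_\rho$ is a convex set of probability measures on $[0,1]$, and substituting the last identity into \eqref{conjugate-duality-averse-2} gives exactly \eqref{Kusuoka-full}. The only genuinely delicate point is the mixture representation of the convex function $w$ by the kinks $w_s$ together with the boundary bookkeeping: attributing the jump of $w$ at $p=1$ to the $\textup{AVaR}_0$ (essential-supremum) component $\lambda_w(\{0\})$ and the slope $w'(0^+)$ to the expectation component $\lambda_w(\{1\})$, and checking that what remains is a nonnegative measure of total mass one. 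Once this correspondence is in place, the Fubini interchange, the affinity of $w\mapsto\lambda_w$, and the convexity of $\varLambda_\rho$ are routine.
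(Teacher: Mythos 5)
Your proof is correct and follows essentially the same route as the paper, which simply invokes the argument of \cite[Cor.~1]{dentcheva2014risk} \emph{verbatim}: starting from the homogeneous dual representation \eqref{conjugate-duality-averse-2} and decomposing each convex distortion function $w\in\Wc_\rho$ into a mixture of the elementary kinks $w_s$ that generate $\textup{AVaR}_s$. Your explicit bookkeeping of the jump of $w$ at $1$ (the $\textup{AVaR}_0$ atom), the initial slope $g(0^+)$ (the expectation atom), the total mass one, and the affinity of $w\mapsto\lambda_w$ (hence convexity of $\varLambda_\rho$) is exactly the content the paper leaves to the cited reference.
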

{ It it worth stressing that in the extant literature, the Kusuoka representation was derived for probabilistic models
with a fixed atomless probability measure $P$. If $P$ has atoms, additional conditions are needed, as discussed in \cite{noyan2015kusuoka}.
Our approach proves the validity of the Kusuoka representation
for probabilistic models with an arbitrary probability measure $P$. The set $\Lambda_\rho$ is determined by the risk form alone; it is
the same for all probabilistic models $[Z,P]$.
This universal property is due to two key conditions: the consistency with the increasing convex order with \emph{both} $Z$ and $P$ varying, and the requirement
that $\Xc$ be Polish and uncountable. }

\section{The risk disintegration formula}
\label{s:disintegration}

Our interest in this section is measuring risk on product spaces.
Consider two {Polish} spaces $\Xc$ and $\Yc$ and their corresponding Borel $\sigma$-algebras  $\Bc(\Xc)$ and $\Bc(\Yc)$.
We can disintegrate any $P\in \Pc(\Xc\times\Yc)$ into its marginal $P_{\Xc}\in \Pc(\Xc)$ and a transition kernel $P_{\Yc|\Xc}:\Xc\to \Pc(\Yc)$
as follows: $P(dx, dy) =  P_{\Xc} (dx) \, P_{\Yc|\Xc}(dy|x)$.

Let $\Qc(\Yc|\Xc)$ be the space of all measurable mappings $Q:\Xc\to \Pc(\Yc)$ (transition kernels). For any measure $\lambda\in \Pc(\Xc)$ and
 any kernel $Q\in \Qc(\Yc|\Xc)$, { the measure $\lambda \smallotimes Q $, defined as
 \[
 \big[\lambda \smallotimes Q\big](dx,dy) = \lambda(dx)\, Q(dy|x),
 \]}
  is an  element of $\Pc(\Xc \times \Yc)$.

Suppose the risk form $\rho:\Bb(\Xc\times\Yc)\times\Pc(\Xc\times\Yc)\to \Rb$ is monotonic, translation equivariant, and normalized.
 Then it induces a mapping $\rho_{\Yc|\Xc}:\Bb(\Xc\times\Yc)\times\Qc(\Yc|\Xc)\to \Bb(\Xc)$
defined as follows:
\begin{equation}
\label{cond-operator}
\rho_{\Yc|\Xc}[Z,Q](x) = \rho[Z, \delta_x \smallotimes Q],\quad x \in \Xc.
\end{equation}
We call the mapping $\rho_{\Yc|\Xc}[\cdot,\cdot]$ the \emph{conditional risk operator} associated with $\rho[\cdot,\cdot]$.

To verify that the values are indeed elements of $\Bb(\Xc)$,
let   $c\in \Rb$ be such that $Z \le c\1$. Then, by monotonicity, translation equivariance and normalization,
\[
\rho[ Z, \delta_x\smallotimes Q] \le \rho[ c\1, \delta_x\smallotimes Q] = c.
\]
The lower bound is similar, and thus the function $\rho_{\Yc|\Xc}[Z,Q]$ is bounded.
The function $\rho_{\Yc|\Xc}[Z,Q](\cdot)$ is measurable, as a composition of measurable mappings.

If the risk form $\rho$ has the support property, then for each $x\in \Xc$ the value of \eqref{cond-operator} depends only on
the function $Z(x,\cdot)\in \Bb(\Yc) $ and the measure $Q(x) \in \Pc(\Yc)$.
We can, therefore, define the
functionals $\rho_{\Yc|x}:\Bb(\Yc)\times\Pc(\Yc)\to\Rb$,
 $x\in \Xc$, as follows:
\begin{equation}
\label{rhox}
\rho_{\Yc|x}[Z(x,\cdot),Q(x)] = \rho_{\Yc|\Xc}[Z,Q](x),\quad x \in \Xc.
\end{equation}
We call them \emph{conditional risk forms} associated with $\rho[\cdot,\cdot]$. Observe that any function from $\Bb(\Yc)$ and
any measure from $\Pc(\Yc)$ may feature as arguments of $\rho_{\Yc|x}[\cdot,\cdot]$.

From now on, we always assume that the risk forms in question have the support property. The inequalities ``$\le$'' between functions are always understood point-wise.

\begin{lemma}
\label{l:conditionals-properties}
If the risk form $\rho[\cdot,\cdot]$ is monotonic (normalized, translation equivariant), then, for every $x\in \Xc$, the
conditional risk form $\rho_{\Yc|x}$ is monotonic (normalized, translation equivariant).
\end{lemma}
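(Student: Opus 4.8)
The plan is to reduce each of the three properties of the conditional form $\rho_{\Yc|x}$ to the corresponding property of $\rho$ itself, by representing arbitrary arguments $V\in\Bb(\Yc)$ and $q\in\Pc(\Yc)$ through ``cylindrical'' lifts to the product space. Fix $x\in\Xc$. Given $V\in\Bb(\Yc)$, define $\tilde V\in\Bb(\Xc\times\Yc)$ by $\tilde V(x',y)=V(y)$; this is bounded (since $V$ is) and measurable, being the composition of $V$ with the projection $\Xc\times\Yc\to\Yc$. Given $q\in\Pc(\Yc)$, let $\bar q\in\Qc(\Yc|\Xc)$ be the constant kernel $\bar q(x')=q$, which is trivially measurable. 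Since $\tilde V(x,\cdot)=V$ and $\bar q(x)=q$, the defining identities \eqref{cond-operator}--\eqref{rhox} give
\[
\rho_{\Yc|x}[V,q]=\rho_{\Yc|\Xc}[\tilde V,\bar q](x)=\rho\big[\tilde V,\,\delta_x\smallotimes\bar q\big].
\]

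From this representation the three claims follow at once. For monotonicity: if $V\le W$ pointwise on $\Yc$, then $\tilde V\le\tilde W$ pointwise on $\Xc\times\Yc$, so $\rho[\tilde V,\delta_x\smallotimes\bar q]\le\rho[\tilde W,\delta_x\smallotimes\bar q]$ by monotonicity of $\rho$, i.e. $\rho_{\Yc|x}[V,q]\le\rho_{\Yc|x}[W,q]$. For normalization: the lift of the zero function is the zero function, hence $\rho_{\Yc|x}[0,q]=\rho[0,\delta_x\smallotimes\bar q]=0$. For translation equivariance: constants lift to constants, so $\widetilde{a\1+V}=a\1+\tilde V$, and therefore $\rho_{\Yc|x}[a\1+V,q]=\rho[a\1+\tilde V,\delta_x\smallotimes\bar q]=a+\rho[\tilde V,\delta_x\smallotimes\bar q]=a+\rho_{\Yc|x}[V,q]$.

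The only point requiring care — and the reason the standing support assumption is used — is the \emph{well-definedness} of $\rho_{\Yc|x}[\cdot,\cdot]$: a priori the value $\rho[\tilde V,\delta_x\smallotimes\bar q]$ could depend on which extension of $(V,q)$ to the product space is chosen. This is ruled out by the discussion preceding \eqref{rhox}, which shows that under the support property $\rho_{\Yc|\Xc}[Z,Q](x)$ depends only on $Z(x,\cdot)$ and $Q(x)$; consequently the particular cylindrical lift $(\tilde V,\bar q)$ used above is immaterial, and all three computations are legitimate. Beyond this bookkeeping, no genuine obstacle arises: each property is inherited verbatim from $\rho$ along the lift.
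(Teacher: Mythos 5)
Your proof is correct and follows essentially the same route as the paper, whose proof is a one-line remark that all three properties follow directly from the defining identity $\rho_{\Yc|x}[Z(x,\cdot),Q(x)]=\rho[Z,\delta_x\smallotimes Q]$. You merely make explicit the cylindrical lifts and the well-definedness via the support property, both of which the paper handles in the discussion preceding the lemma.
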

\begin{proof}
All the properties follow directly from the equation
\[
\rho_{\Yc|x}[Z(x,\cdot),Q(x)]=\rho[Z, \delta_x \smallotimes Q],
\]
 which defines the conditional risk form.
 \qed
\end{proof}
\begin{definition}
\label{d:internal}
A risk form $\rho:\Bb(\Xc\times\Yc)\times\Pc(\Xc\times\Yc)\to \Rb$ is \emph{conditionally consistent} if for all
$Z,Z' \in \Bc(\Xc\times\Yc)$ and all $Q,Q' \in \Qc(\Yc|\Xc)$ such that
\[
\rho_{\Yc|\Xc}[Z,Q] \le \rho_{\Yc|\Xc}[Z',Q'],
\]
we also have
\[
\rho[ Z, \lambda\smallotimes Q] \le \rho[ Z', \lambda\smallotimes Q'], \quad \forall \, \lambda \in \Pc(\Xc).
\]
\end{definition}
The following result is the foundation of our further considerations.
\begin{theorem}
\label{t:total-risk}
Suppose a risk form $\rho:\Bb(\Xc\times\Yc)\times\Pc(\Xc\times\Yc)\to \Rb$ is monotonic, normalized, translation equivariant, has the support property, and is {conditionally consistent}. Then
a risk form $\rho_{\Xc}:\Bb(\Xc) \times \Pc(\Xc)\to \Rb$ exists, such that for all $[Z,P]\in \Bb(\Xc\times\Yc)\times\Pc(\Xc\times\Yc)$
the following formula is true:
\begin{equation}
\label{total-risk}
\rho[Z,P] = \rho_{\Xc}\big[\rho_{\Yc|\Xc}[Z,P_{\Yc|\Xc}], P_{\Xc}\big].
\end{equation}
The risk form $\rho_{\Xc}$ is uniquely defined by the equation
\begin{equation}
\label{r-def}
\rho_{\Xc}[f,P_{\Xc}] = \rho[\underline{f},P], \quad \text{with} \quad \underline{f}(x,y) \equiv f(x).
\end{equation}
It is monotonic, normalized, translation equivariant, and has the support property.
\end{theorem}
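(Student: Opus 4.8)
The plan is to take the formula \eqref{r-def} as the \emph{definition} of $\rho_{\Xc}$ and then to show, in order, that it is well posed, that it implies \eqref{total-risk}, that it is the only risk form with that property, and finally that it inherits the four structural properties. The single fact that drives everything is this: for every $f\in\Bb(\Xc)$ and every kernel $Q\in\Qc(\Yc|\Xc)$,
\[
\rho_{\Yc|\Xc}[\underline f,Q]=f .
\]
Indeed, for each $x\in\Xc$ the slice $\underline f(x,\Cdot)$ is the constant function $f(x)\1$ on $\Yc$, so by \eqref{rhox} together with the normalization and translation equivariance of $\rho_{\Yc|x}$ (Lemma \ref{l:conditionals-properties}) one gets $\rho_{\Yc|\Xc}[\underline f,Q](x)=\rho_{\Yc|x}[f(x)\1,Q(x)]=f(x)+\rho_{\Yc|x}[0,Q(x)]=f(x)$.

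Well-posedness and uniqueness follow quickly. If $P,P'\in\Pc(\Xc\times\Yc)$ have a common marginal $\mu:=P_{\Xc}=P'_{\Xc}$, write $Q=P_{\Yc|\Xc}$, $Q'=P'_{\Yc|\Xc}$; since $\rho_{\Yc|\Xc}[\underline f,Q]=f=\rho_{\Yc|\Xc}[\underline f,Q']$, conditional consistency (applied in both directions) gives $\rho[\underline f,\mu\smallotimes Q]=\rho[\underline f,\mu\smallotimes Q']$, i.e. $\rho[\underline f,P]=\rho[\underline f,P']$. Hence \eqref{r-def} determines $\rho_{\Xc}[f,\mu]$ unambiguously for every $\mu\in\Pc(\Xc)$ (each $\mu$ is the marginal of, say, $\mu\smallotimes\delta_{y_0}$), and $\rho_{\Xc}$ is measurable as a composition of measurable maps. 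Moreover, any risk form satisfying \eqref{total-risk}, evaluated at $[\underline f,P]$, must equal $\rho_{\Xc}\big[\rho_{\Yc|\Xc}[\underline f,P_{\Yc|\Xc}],P_{\Xc}\big]=\rho_{\Xc}[f,P_{\Xc}]$ by the identity above, so it necessarily obeys \eqref{r-def}; this is the uniqueness claim.

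For the disintegration formula, fix $[Z,P]$, put $\mu=P_{\Xc}$, $Q=P_{\Yc|\Xc}$, and $g:=\rho_{\Yc|\Xc}[Z,Q]\in\Bb(\Xc)$. Then $\rho_{\Yc|\Xc}[\underline g,Q]=g=\rho_{\Yc|\Xc}[Z,Q]$, so conditional consistency applied to the pairs $(Z,Q)$ and $(\underline g,Q)$ in both directions yields $\rho[Z,\mu\smallotimes Q]=\rho[\underline g,\mu\smallotimes Q]$. Since $\mu\smallotimes Q=P$ and its $\Xc$-marginal is $\mu$, the right-hand side equals $\rho_{\Xc}[g,\mu]$ by \eqref{r-def}, which is exactly $\rho_{\Xc}\big[\rho_{\Yc|\Xc}[Z,P_{\Yc|\Xc}],P_{\Xc}\big]$; this proves \eqref{total-risk}.

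It remains to transfer the structural properties through \eqref{r-def}. Monotonicity, normalization, and translation equivariance are immediate from the corresponding properties of $\rho$ and the elementary facts that $f\le f'$ implies $\underline f\le\underline{f'}$, that $\underline 0=0$, and that $\underline{a\1+f}=a\1+\underline f$. The support property needs one small device: fix $y_0\in\Yc$ and use $P=\mu\smallotimes\delta_{y_0}$, for which one checks $\supp(P)=\supp(\mu)\times\{y_0\}$; then $\1_{\supp(P)}\underline f$ and $\1_{\supp(P)}\underline{\1_{\supp(\mu)}f}$ are the \emph{same} function, namely $(x,y)\mapsto\1_{\supp(\mu)}(x)\1_{\{y_0\}}(y)f(x)$, so applying the support property of $\rho$ to $\underline f$ and to $\underline{\1_{\supp(\mu)}f}$ gives $\rho_{\Xc}[f,\mu]=\rho[\underline f,P]=\rho[\underline{\1_{\supp(\mu)}f},P]=\rho_{\Xc}[\1_{\supp(\mu)}f,\mu]$. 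The only step where a hypothesis is used in an essential, non-bookkeeping way is the well-posedness argument, where conditional consistency is genuinely needed; I expect that to be the main thing to get right, with the support-property computation being the only other mildly delicate point.
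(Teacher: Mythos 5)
Your proof is correct and follows essentially the same route as the paper's: both hinge on conditional consistency applied in both inequality directions together with the computation $\rho_{\Yc|\Xc}[\underline f,Q]=f$; you merely reverse the order of presentation by taking \eqref{r-def} as the definition, checking well-posedness, and then deducing \eqref{total-risk}, whereas the paper first observes that $\rho[Z,P]$ factors through $\big(\rho_{\Yc|\Xc}[Z,P_{\Yc|\Xc}],P_{\Xc}\big)$ and then reads off \eqref{r-def}. Your write-up is, if anything, slightly more complete, since you make explicit that every $f\in\Bb(\Xc)$ and every $\mu\in\Pc(\Xc)$ actually occur as arguments of $\rho_{\Xc}$ (via $\underline f$ and $\mu\smallotimes\delta_{y_0}$) and you carry out the support-property verification, which the paper leaves to the reader.
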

\begin{proof}
Let us verify \eqref{total-risk}. Suppose $[Z,P]$ and $[Z',P']$ are such that
\[
\rho_{\Yc|\Xc}[ Z, P_{\Yc|\Xc}] =  \rho_{\Yc|\Xc}[ Z', P'_{\Yc|\Xc}].
\]
Then it follows
from Definition \ref{d:internal} that $\rho[ Z, \lambda\smallotimes P_{\Yc|\Xc}] = \rho[ Z', \lambda\smallotimes P'_{\Yc|\Xc}]$ for all
$\lambda\in \Pc(\Xc)$. If, additionally, the marginal measures $P_{\Xc}$ and $P'_{\Xc}$ are identical, by setting $\lambda=P_{\Xc}=P'_{\Xc}$ we conclude that
\[
\rho[Z,P] = \rho[ Z, P_{\Xc}\smallotimes P_{\Yc|\Xc}] = \rho[ Z', P'_{\Xc}\smallotimes P'_{\Yc|\Xc}]=\rho[Z',P'].
\]
It follows that the value of $\rho[Z,P]$ is fully determined by the value of the conditional risk operator
$\rho_{\Yc|\Xc}[ Z, P_{\Yc|\Xc}]$ and
the marginal measure $P_{\Xc}$. Therefore, the disintegration formula \eqref{total-risk} is true.

It remains to verify the properties of $\rho_{\Xc}$. Set $Z(x,y) = \underline{f}(x,y) \equiv f(x)$ in \eqref{total-risk}. Then, by the support, translation equivariance, and
normalization properties of $\rho[\cdot,\cdot]$, for every $x \in \Xc$ we obtain
\[
\rho_{\Yc|\Xc}[ \underline{f}, P_{\Yc|\Xc}](x) = \rho[ f(x)\1, \delta_x\smallotimes P_{\Yc|\Xc}] = f(x).
\]
Combining with \eqref{total-risk}, we observe that the identity \eqref{r-def} is true.
All the postulated properties of $\rho_{\Xc}[\cdot,\cdot]$ follow  from the corresponding properties of $\rho[\cdot,\cdot]$. \qed
\end{proof}

We call the identity \eqref{total-risk} the \emph{risk disintegration formula}. It represents the risk form $\rho[\cdot,\cdot]$
by its marginal risk form $\rho_{\Xc}[\cdot,\cdot]$ and its conditional risk operator $\rho_{\Yc|\Xc}[\cdot,\cdot]$.

{
\begin{example}
\label{e:composite}
Consider the risk form $\rho:\Bb(\Xc\times\Yc)\times \Pc(\Xc\times\Yc)\to\Rb$ defined as follows:
\[
\rho[Z,P] = \min_{\eta(\cdot)} \Eb_P \Big[
\eta(x) + \frac{1}{\alpha} \big(Z(x,y) - \eta(x)\big)_+\Big],
\]
where the minimization is over all measurable mappings $\eta:\Xc\to\Rb$. Directly from Definition \ref{d:risk}
we verify that it is normalized, monotonic, translation invariant, and has the support property. To verify
the conditional consistency, observe that
\begin{equation}
\label{example-cond}
\rho_{\Yc|x}[Z,Q] = \min_{\eta}  \Big[
\eta + \frac{1}{\alpha}\Eb_{Q(x)}\big[ \big(Z(x,y) - \eta\big)_+\big]\Big].
\end{equation}
Therefore, the relation
\[
\rho[Z,\delta_x \smallotimes Q] \le \rho[Z',\delta_x \smallotimes Q'], \quad \forall\;x\in \Xc,
\]
 means that
\[
\min_{\eta}  \Big[
\eta + \frac{1}{\alpha}\Eb_{Q(x)}\big[ \big(Z(x,y) - \eta\big)_+\big]\Big] \le
\min_{\eta}  \Big[
\eta + \frac{1}{\alpha}\Eb_{Q'(x)} \big[\big(Z'(x,y) - \eta\big)_+\big]\Big],
\]
for all $x\in \Xc$. By the interchangeability of $\min_{\eta(\cdot)}$ and $\Eb_\lambda$, for every $\lambda \in \Pc(\Xc)$ we obtain:
\begin{align*}
\rho[Z,\lambda \smallotimes Q] &= \min_{\eta(\cdot)} \Eb_\lambda \Big[
\eta(x) + \frac{1}{\alpha}\Eb_{Q(x)} \big[\big(Z(x,y) - \eta(x)\big)_+\big]\Big] \\
&=  \Eb_\lambda \min_{\eta}\Big[
\eta + \frac{1}{\alpha}\Eb_{Q(x)} \big[\big(Z(x,y) - \eta\big)_+\big]\Big] \\
&\le
\Eb_\lambda \min_{\eta}\Big[
\eta + \frac{1}{\alpha}\Eb_{Q'(x)} \big[\big(Z'(x,y) - \eta\big)_+\big]\Big] = \rho[Z',\lambda \smallotimes Q'].
\end{align*}
Therefore, the assumptions of Theorem \ref{t:total-risk} are satisfied, and the risk form $\rho[\cdot,\cdot]$ can be disintegrated
into a marginal and conditional form. Using the dual representation of the
Average Value at Risk in \eqref{example-cond}, we obtain the explicit formula:
\[
\rho[Z,P] = \Eb_{P_{\Xc}}\Big[ \textup{AVaR}_\alpha\big[Z(x,y),P_{\Yc|\Xc} \big]  \Big].
\]
This means that $\rho_{\Xc}[\,\cdot\,,P_{\Xc}]=\Eb_{P_{\Xc}}[\,\cdot\,]$ and $\rho_{\Yc|\Xc}[\cdot,P_{\Yc|\Xc}]
= \textup{AVaR}_\alpha[\,\cdot\, ,P_{\Yc|\Xc}] $.

The risk form $\rho[\cdot,\cdot]$ is not law invariant on the product space, because pairs $[Z,P]$ having identical distribution
may have different conditional and marginal distributions. This can be seen on the case of $\Xc=\Yc= [0,1]$ with the Lebesgue measure $P$
on $\Xc\times\Yc$ and two functions: $Z(x,y)=x$ and $Z'(x,y)=y$. Then $[Z,P]\sim [Z',P]$. However,
 \begin{align*}
\rho[Z,P] &= \min_{\eta(\cdot)} \int_0^1 \int_0^1 \Big[
\eta(x) + \frac{1}{\alpha} \big(x - \eta(x)\big)_+\Big]\;dy\;dx \\
&= \int_0^1 \min_{\eta}\Big[
\eta + \frac{1}{\alpha} \big(x - \eta\big)_+\Big]\;dx = \int_0^1 x \;dx= \frac{1}{2} ,
\end{align*}
and
\begin{align*}
\rho[Z',P]
&= \min_{\eta(\cdot)} \int_0^1 \int_0^1 \Big[ \eta(x) + \frac{1}{\alpha} \big(y - \eta(x)\big)_+\Big]\;dy\;dx \\
&\ge \int_0^1 \min_{\eta}  \int_0^1 \Big[ \eta + \frac{1}{\alpha} \big(y - \eta\big)_+\Big]\;dy\;dx =  \int_0^1 \avar_{\alpha}(Y)\;dx = \avar_{\alpha}(Y).
\end{align*}
We observe that the minimizing $\eta$ in the second line of the formula above is in fact constant and equal to $1-\alpha$. Therefore,
it can be used in the first line, the inequality becomes an equation, and $\rho[Z',P] = 1 -\alpha/2$. We conclude that $\rho[Z,P] \ne \rho[Z',P]$,
unless $\alpha=1$. This example illustrates the fact that the concept of law invariance on product spaces is excessively demanding.
Therefore, we do not assume law invariance of risk forms on product spaces.
\end{example}
}

%

\section{Composite disintegration}
\label{s:composite}

We now generalize our results to the product of multiple Polish spaces $\Xc_j$, $j=1,\dots,n$, with their corresponding Borel $\sigma$-algebras
$\Bc(\Xc_j)$, $j=1,\dots,n$. 
For a non\-empty set of indices $J \subset \{1,\dots,n\}$, we write
\[
\Xc_{J} = {\bigtimes_{j\in J}}\Xc_j, \quad\text{and}\quad J^c = \{1,\dots,n\} \setminus J.
\]
Let $P$ be a probability measure on $\Xc  = \displaystyle{\bigtimes}_{j=1}^n\Xc_j$.
For every { $J$ such that $J^c\ne\emptyset$,} we can
disintegrate $P$ into its marginal $P_{\Xc_J}\in \Pc(\Xc_J)$ and a transition kernel $P_{\Xc_{J^c}|\Xc_J}:\Xc_J\to \Pc(\Xc_{J^c})$
as follows:
\[
P(dx_J, dx_{J^c}) =  P_{\Xc_J}(dx_J) \, P_{\Xc_{J^c}|\Xc_J}(dx_{J^c}|x_J).
\]
For the case $J^c=\emptyset$, trivially $P(dx_J)= P_\Xc(dx_J)$.
We denote the set of transition kernels from $\Xc_J$ to $\Pc(\Xc_{J^c})$ by $\Qc_{\Xc_{J^c}|\Xc_J}$.

Exactly as in Section \ref{s:disintegration}, a monotonic, translation equivariant, and normalized risk form $\rho:\Bb(\Xc)\times\Pc(\Xc)\to \Rb$
on the product space induces a family of conditional risk operators
$\rho_{\Xc_{J^c}|\Xc_J}:\Bb(\Xc) \times \Qc_{\Xc_{J^c}|\Xc_J}\to\Bb(\Xc_J)$,
 as follows:
\begin{equation}
\label{rhoXJ}
\rho_{\Xc_{J^c}|\Xc_J}[Z,Q](x_J) = \rho[ Z, \delta_{x_J} \smallotimes Q],\quad x_J \in \Xc_J.
\end{equation}
If the risk form $\rho$ has the support property, then for each $x_J\in \Xc_J$ the value of \eqref{cond-operator} depends only on
the function $Z(x_J,\cdot)\in \Bb(\Xc_{J^c}) $ and the measure $Q(x_J) \in \Pc(\Xc_{J^c})$.
As in Section \ref{s:disintegration}, we define the conditional risk forms
functionals $\rho_{\Xc_{J^c}|x_J}:\Bb(\Xc_{J^c})\times\Pc(\Xc_{J^c})\to\Rb$,
 $x\in \Xc$, as follows:
\begin{equation}
\label{rhoxJ}
\rho_{\Xc_{J^c}|x_J}[Z(x_J,\cdot),Q(x_J)] = \rho_{\Xc_{J^c}|\Xc_J}[Z,Q](x_J),\quad x_J \in \Xc_J.
\end{equation}

\begin{definition}
\label{d:strong}
A risk form $\rho:\Bb(\Xc)\times\Pc(\Xc)\to \Rb$ is \emph{conditionally consistent with respect to $\Xc_{J}$, where $\emptyset \ne J \subset \{1,\dots,n\}$},
if for all $Z,Z' \in \Bc(\Xc)$ and all $Q,Q' \in \Qc_{\Xc_{J^c}|\Xc_J}$ the  inequality
\[
\rho_{\Xc_{J^c}|\Xc_J}[Z,Q] \le \rho_{\Xc_{J^c}|\Xc_J}[Z',Q'],
\]
implies that
\[
\rho[ Z, \lambda\smallotimes Q] \le \rho[ Z', \lambda\smallotimes Q'], \quad \forall \, \lambda \in \Pc(\Xc_J).
\]
\end{definition}


The following corollary results directly from Theorem \ref{t:total-risk}.  

\begin{corollary}
\label{c:total-risk-J}
If a risk form $\rho:\Bb(\Xc)\times\Pc(\Xc)\to \Rb$ is monotonic, normalized, translation equivariant, and conditionally consistent with respect to $\Xc_{J}$,
where $\emptyset \ne J \subset \{1,\dots,n\}$,
then
a risk form $\rho_{\Xc_J}:\Bb(\Xc_J) \times \Pc(\Xc_J)\to \Rb$ exists, such that for all $[Z,P]\in \Bb(\Xc)\times\Pc(\Xc)$
the following formula holds:
\begin{equation}
\label{total-risk-J}
\rho[Z,P] = \rho_{\Xc_J}\big[\rho_{\Xc_{J^c}|\Xc_J}[Z,P_{\Xc_{J^c}|\Xc_J}], P_{\Xc_J}\big],
\end{equation}
where the marginal risk form $\rho_{\Xc_J}$ is uniquely defined by the equation \eqref{r-def} with $(\Xc_j,\Xc_{J^c})$ replacing $(\Xc,\Yc)$.
It is monotonic, normalized, translation equivariant, and has the support property.
\end{corollary}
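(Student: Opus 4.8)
The plan is to obtain this from Theorem~\ref{t:total-risk} by identifying the $n$-fold product $\Xc=\bigtimes_{j=1}^{n}\Xc_j$ with the two-factor product $\Xc_J\times\Xc_{J^c}$, so that $(\Xc_J,\Xc_{J^c})$ plays the role of $(\Xc,\Yc)$ there. I would first dispatch the degenerate case $J^c=\emptyset$: then $\Xc=\Xc_J$, the conditional operator in \eqref{rhoXJ} is the identity on $\Bb(\Xc)$, and \eqref{total-risk-J} holds with $\rho_{\Xc_J}=\rho$, which trivially satisfies \eqref{r-def} and inherits all the asserted properties. So assume $J^c\ne\emptyset$.

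Next I would record the topological bookkeeping. A finite product of Polish spaces is Polish, so $\Xc_J$ and $\Xc_{J^c}$ are Polish; and for separable metrizable spaces the Borel $\sigma$-algebra of a product is the product of the Borel $\sigma$-algebras, so the coordinate-reshuffling bijection $\iota:\Xc_J\times\Xc_{J^c}\to\Xc$, $\iota(x_J,x_{J^c})=x$, is a homeomorphism carrying $\Bc(\Xc_J)\otimes\Bc(\Xc_{J^c})$ onto $\Bc(\Xc)$. Transporting $\rho$ along $\iota$ gives a risk form $\tilde\rho:\Bb(\Xc_J\times\Xc_{J^c})\times\Pc(\Xc_J\times\Xc_{J^c})\to\Rb$, $\tilde\rho[Z,P]=\rho[Z\circ\iota^{-1},\iota_*P]$; measurability of the functional is preserved because $\iota$ and $\iota^{-1}$ are measurable. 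Under this transport the disintegration $P(dx_J,dx_{J^c})=P_{\Xc_J}(dx_J)\,P_{\Xc_{J^c}|\Xc_J}(dx_{J^c}|x_J)$ corresponds exactly to the disintegration of $\iota_*^{-1}P$ into its $\Xc_J$-marginal and its $\Xc_{J^c}|\Xc_J$ kernel, because reordering coordinates commutes with forming marginals and regular conditional kernels; similarly $\delta_{x_J}\smallotimes Q$ and $\lambda\smallotimes Q$ transport correctly. Consequently the conditional risk operator $\rho_{\Xc_{J^c}|\Xc_J}$ of \eqref{rhoXJ} is precisely the operator obtained from $\tilde\rho$ through the construction \eqref{cond-operator}, and the conditional risk forms \eqref{rhoxJ} match \eqref{rhox}.

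I would then verify the hypotheses of Theorem~\ref{t:total-risk} for $\tilde\rho$. Monotonicity, normalization, translation equivariance, and the support property (in force throughout Section~\ref{s:disintegration} by the standing assumption) are all preserved by the homeomorphic transport. The only substantive point is that ``conditionally consistent'' for $\tilde\rho$ in the sense of Definition~\ref{d:internal} is literally the statement ``conditionally consistent with respect to $\Xc_J$'' for $\rho$ in the sense of Definition~\ref{d:strong}; this is immediate once the conditional operators are identified as above. Theorem~\ref{t:total-risk} then produces a risk form $\tilde\rho_{\Xc_J}$ on $\Bb(\Xc_J)\times\Pc(\Xc_J)$ satisfying the disintegration formula \eqref{total-risk} for $\tilde\rho$, uniquely characterized by \eqref{r-def}, and monotonic, normalized, translation equivariant, with the support property. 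Setting $\rho_{\Xc_J}:=\tilde\rho_{\Xc_J}$ and transporting back through $\iota$ gives \eqref{total-risk-J}, the defining identity \eqref{r-def} with $(\Xc_J,\Xc_{J^c})$ in place of $(\Xc,\Yc)$, and all the stated properties.

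The argument contains no genuinely hard step; its content is entirely in Theorem~\ref{t:total-risk}. The only places warranting care are the claim that $\iota$ intertwines the two disintegration procedures (i.e.\ that permuting coordinates commutes with taking marginals and conditional kernels) and the degenerate case $J^c=\emptyset$ treated separately above; I expect the former to be the main, though minor, obstacle to making the reduction airtight.
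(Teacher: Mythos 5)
Your reduction is correct and is exactly what the paper intends: the paper offers no proof beyond the remark that the corollary ``results directly from Theorem~\ref{t:total-risk}'', and your argument---identifying $\Xc$ with $\Xc_J\times\Xc_{J^c}$, transporting $\rho$, and observing that Definition~\ref{d:strong} becomes Definition~\ref{d:internal} under this identification---supplies precisely the routine details that remark leaves implicit. Your separate handling of the degenerate case $J^c=\emptyset$ is a sensible addition that the paper glosses over.
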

A question arises what is the relation between the marginal and conditional risk forms for different subspaces.

\begin{theorem}
\label{t:inheritance}
Suppose $\emptyset \neq J \subset L \subset \{1,\dots,n\}$.
If a risk form $\rho:\Bb(\Xc)\times\Pc(\Xc)\to \Rb$ is monotonic, normalized, translation equivariant, and conditionally consistent with respect to
both $\Xc_J$ and $\Xc_{L}$, then the following statements are true:
\begin{tightlist}{(iii)}
\item
For all $x_J\in \Xc_J$ the conditional risk forms $\rho_{\Xc_{J^c}|x_J}$ are monotonic, normalized, translation equivariant, and conditionally consistent with respect to
$\Xc_{L\setminus J}$;
\item For all  $x_{L}\in \Xc_{L}$ we have
$\big(\rho_{\Xc_{J^c}|x_J}\big)_{\Xc_{L^c}|x_{L\setminus J}} = \rho_{\Xc_{L^c}|x_{L}}$;
\item
The marginal risk form $\rho_{\Xc_L}$  is monotonic, normalized, translation equivariant, and {conditionally consistent with respect to $\Xc_J$};
\item We have $\rho_{\Xc_J} = \big(\rho_{\Xc_L}\big)_{\Xc_J}$.
 \end{tightlist}
 \end{theorem}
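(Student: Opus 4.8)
The plan is to prove the four statements by systematically translating properties of $\rho$ into properties of its conditional and marginal forms, using the defining equations \eqref{rhoXJ}, \eqref{rhoxJ}, and \eqref{r-def}, together with repeated application of Theorem \ref{t:total-risk} and Corollary \ref{c:total-risk-J}. The conceptual engine throughout is that a conditional risk form $\rho_{\Xc_{J^c}|x_J}$ is itself a risk form on the product space $\Xc_{L\setminus J}\times\Xc_{L^c}$ (after identifying $\Xc_{J^c}=\Xc_{L\setminus J}\times\Xc_{L^c}$, which is legitimate since $J\subset L$), so the disintegration machinery applies to it.

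For part (i), monotonicity, normalization, and translation equivariance of $\rho_{\Xc_{J^c}|x_J}$ are immediate from Lemma \ref{l:conditionals-properties} (or its obvious analogue in the multi-space setting). The substantive claim is conditional consistency with respect to $\Xc_{L\setminus J}$. Here I would unwind definitions: fix $x_J$, take $Z,Z'$ and kernels $Q,Q'\in\Qc_{\Xc_{L^c}|\Xc_{L\setminus J}}$ with $(\rho_{\Xc_{J^c}|x_J})_{\Xc_{L^c}|\Xc_{L\setminus J}}[Z,Q]\le(\rho_{\Xc_{J^c}|x_J})_{\Xc_{L^c}|\Xc_{L\setminus J}}[Z',Q']$ pointwise on $\Xc_{L\setminus J}$; I need $\rho_{\Xc_{J^c}|x_J}[Z,\mu\smallotimes Q]\le\rho_{\Xc_{J^c}|x_J}[Z',\mu\smallotimes Q']$ for every $\mu\in\Pc(\Xc_{L\setminus J})$. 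By the definition \eqref{rhoxJ} this is a statement about $\rho$ evaluated at $\delta_{x_J}\smallotimes(\text{something})$, and the hypothesis translates into an inequality between conditional risk \emph{operators} $\rho_{\Xc_{L^c}|\Xc_L}$ of $\rho$ restricted to the fiber over $x_J$. The conditional consistency of $\rho$ with respect to $\Xc_L$ then lets me push a measure $\lambda\in\Pc(\Xc_L)$ of the form $\delta_{x_J}\smallotimes\mu$ through; the support property ensures everything depends only on the relevant fiber. This bookkeeping — keeping straight which measures live on which product of spaces and verifying that the kernels assemble correctly — is where I expect the real work, and it is the main obstacle.

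Part (ii) is the tower-property identity. I would prove it by direct computation: for $x_L=(x_J,x_{L\setminus J})$ and arbitrary $W\in\Bb(\Xc_{L^c})$, $R\in\Pc(\Xc_{L^c})$,
\[
\big(\rho_{\Xc_{J^c}|x_J}\big)_{\Xc_{L^c}|x_{L\setminus J}}[W,R]
= \rho_{\Xc_{J^c}|x_J}\big[\underline{W},\,\delta_{x_{L\setminus J}}\smallotimes \underline{R}\big]
= \rho\big[\underline{W},\,\delta_{x_J}\smallotimes(\delta_{x_{L\setminus J}}\smallotimes\underline{R})\big],
\]
where underlining denotes the appropriate constant-in-the-missing-coordinates extension, using \eqref{rhoxJ} twice. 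On the other side, $\rho_{\Xc_{L^c}|x_L}[W,R]=\rho[\underline{W},\delta_{x_L}\smallotimes\underline{R}]$. Since $\delta_{x_J}\smallotimes(\delta_{x_{L\setminus J}}\smallotimes\underline{R})=\delta_{x_L}\smallotimes\underline{R}$ as measures on $\Xc$ (associativity of the $\smallotimes$ construction applied to Dirac masses), the two agree; part (i) guarantees the left-hand conditional form is defined.

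For parts (iii) and (iv), I would invoke Corollary \ref{c:total-risk-J} applied with the index set $L$ to obtain $\rho_{\Xc_L}$, which is already asserted there to be monotonic, normalized, translation equivariant, and to have the support property. What remains for (iii) is conditional consistency of $\rho_{\Xc_L}$ with respect to $\Xc_J$ (note $J\subset L$). Here the strategy is to express the conditional risk operator of $\rho_{\Xc_L}$ over $\Xc_J$ in terms of that of $\rho$: using the uniqueness characterization \eqref{r-def}, $(\rho_{\Xc_L})_{\Xc_{L\setminus J}|\Xc_J}[f,Q](x_J)=\rho_{\Xc_L}[f,\delta_{x_J}\smallotimes Q]$, and I would relate the right side to $\rho$ evaluated at a suitable measure on $\Xc$ by feeding it through the disintegration formula \eqref{total-risk-J}. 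An inequality between these operators then reduces, via the conditional consistency of $\rho$ with respect to $\Xc_J$, to the desired inequality for $\rho_{\Xc_L}$. Finally, part (iv) is again a uniqueness argument: $(\rho_{\Xc_L})_{\Xc_J}[f,P_{\Xc_J}]=\rho_{\Xc_L}[\underline f,P_{\Xc_L}]=\rho[\underline{\underline f},P]=\rho_{\Xc_J}[f,P_{\Xc_J}]$, where the middle step uses \eqref{r-def} for $\rho_{\Xc_L}$ and the constant extension $\underline f$ on $\Xc_L$ coincides, after a further constant extension, with the constant extension of $f$ on all of $\Xc$; since $\rho_{\Xc_J}$ is uniquely characterized by \eqref{r-def}, the two forms coincide.
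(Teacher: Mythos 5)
Your overall route coincides with the paper's. Parts (ii) and (iv) are proved there by exactly the computations you give: for (ii), unfolding the definitions and using $\delta_{x_J}\smallotimes\delta_{x_{L\setminus J}}\smallotimes Q=\delta_{x_L}\smallotimes Q$; for (iv), the chain $\rho_{\Xc_J}[f,P_{\Xc_J}]=\rho[\underline f,P]=\rho_{\Xc_L}[\underline{\bar f},P_{\Xc_L}]=(\rho_{\Xc_L})_{\Xc_J}[f,P_{\Xc_J}]$. The elementary properties in (i) and (iii) do come from Lemma \ref{l:conditionals-properties} and the argument of Corollary \ref{c:total-risk-J}, as you say.

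The place where you stop short, however, is precisely where the paper's proof has its only real content: the conditional-consistency claims in (i) and (iii). In (i), with $K=L\setminus J$, your hypothesis yields inequalities of the form $\rho[\,\cdot\,,\delta_{x_L}\smallotimes\cdot\,]\le\rho[\,\cdot\,,\delta_{x_L}\smallotimes\cdot\,]$ only on the fiber $\{x_J\}\times\Xc_K$, whereas Definition \ref{d:strong} for $\Xc_L$ requires them at \emph{every} $x_L\in\Xc_L$ before conditional consistency of $\rho$ with respect to $\Xc_L$ may be invoked. The paper closes this by extending $Z,Z'$ to functions $\bar Z,\bar Z'$ on all of $\Xc$ and $Q,Q'$ to kernels $\bar Q,\bar Q'\in\Qc_{\Xc_{L^c}|\Xc_L}$ that are constant in $x_J$; since nothing then depends on $x_J$, the fiber inequality at the one fixed $x_J$ is already the full pointwise inequality, conditional consistency gives $\rho[\bar Z,\psi\smallotimes\bar Q]\le\rho[\bar Z',\psi\smallotimes\bar Q']$ for all $\psi\in\Pc(\Xc_L)$, and setting $\psi=\delta_{x_J}\smallotimes\lambda$ finishes. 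You gesture at exactly this specialization of $\psi$ but never supply the extension that legitimizes the appeal to conditional consistency. Likewise in (iii), the premise concerns $\rho_{\Xc_L}$ with kernels in $\Qc_{\Xc_K|\Xc_J}$, and to use conditional consistency of $\rho$ with respect to $\Xc_J$ you must manufacture kernels in $\Qc_{\Xc_{J^c}|\Xc_J}$; the paper does so by tensoring $Q$ and $Q'$ with one and the same arbitrary kernel $M\in\Qc_{\Xc_{L^c}|\Xc_J}$ and extending $Z,Z'$ constantly in $x_{L^c}$ — a device absent from your sketch, which only says the inequality should be ``fed through the disintegration formula.'' These constructions are short, but they are the entire proof of (i) and (iii); as written, your argument for those two parts is an accurate plan rather than a proof.
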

 \begin{proof}
 The monotonicity, normalization, and translation equivariance of the marginal and conditional forms follow from
 the corresponding properties of $\rho$ via formula \eqref{total-risk-J} by considering special classes of functions
in $\Bb(\Xc)$: the functions that depend only on $x_J$ (for the marginal), and the functions that depend only on
 $x_{J^c}$ (for the conditionals). The proof is identical to
 the last part of the proof of Theorem \ref{t:total-risk}.

It remains to prove conditional consistency of the conditional and marginal forms and the tower formulae (ii) and (iv).
For a fixed $x_J\in \Xc_J$, we  verify Definition \ref{d:strong} for the conditional risk  form $\rho_{\Xc_{J^c}|x_J}$
with respect to
$\Xc_{L\setminus J}$.
Let $K=L\setminus J$, and let $Z,Z' \in \Bb(\Xc_{J^c})$ and $Q,Q' \in \Qc_{\Xc_{L^c}|\Xc_K}$. Suppose
\[
\big(\rho_{\Xc_{J^c}|x_J}\big)_{\Xc_{L^c}|\Xc_K}[ Z, Q] \le
\big(\rho_{\Xc_{J^c}|x_J}\big)_{\Xc_{L^c}|\Xc_K}[ Z', Q'],
\]
which means that
\begin{equation}
\label{premise-cond}
\rho_{\Xc_{J^c}|x_J}[ Z, \delta_{x_{K}}\smallotimes Q] \le \rho_{\Xc_{J^c}|x_J}[ Z', \delta_{x_{K}}\smallotimes Q'], \quad \forall \, x_K\in \Xc_K.
\end{equation}
We can formally extend the functions $Z$ and $Z'$ to the entire domain $\Xc$ by setting
$\bar{Z}(x_J, x_{J^c}) = Z(x_{J^c})$ and $\bar{Z'}(x_J, x_{J^c}) = Z'(x_{J^c})$. We can also define
the kernels $\bar{Q}$ and $\bar{Q}'$ in $\Qc_{\Xc_{L^c}|\Xc_{L}}$ by setting
$\bar{Q}(x_J,x_K) = Q(x_K)$ and $\bar{Q}'(x_J,x_K) = Q'(x_K)$.
Then
\[
\rho_{\Xc_{J^c}|x_J}[ Z, \delta_{x_{K}}\smallotimes Q] = \rho[ \bar{Z}, \delta_{x_{L}}\smallotimes \bar{Q}].
\]
A similar equation is true for $Z'$ and $Q'$. Then \eqref{premise-cond} can be written as follows:
\[
\rho[ \bar{Z}, \delta_{x_{L}}\smallotimes \bar{Q}] \le \rho[ \bar{Z}', \delta_{x_{L}}\smallotimes \bar{Q}'].
\]
By the conditional consistency of $\rho$,
\[
\rho[ \bar{Z}, \psi\smallotimes \bar{Q}] \le \rho[ \bar{Z}', \psi\smallotimes \bar{Q}'],\quad \forall \psi\in \Pc(\Xc_J).
\]
 Let $\lambda \in \Pc(\Xc_K)$.
By setting $\psi = \delta_{x_J}\smallotimes \lambda$ in the last displayed inequality, and using the fact that $\bar{Z}$, $\bar{Z}'$, $\bar{Q}$, and $\bar{Q}'$ do not depend on $x_J$, we conclude that
\[
\rho_{\Xc_{J^c}|x_J}[ Z, \lambda \smallotimes Q] \le \rho_{\Xc_{J^c}|x_J}[ Z', \lambda \smallotimes Q'].
\]
This proves the conditional consistency of the conditional risk  form $\rho_{\Xc_{J^c}|x_J}$.

To verify  (ii), consider $f\in \Bb\big(\Xc_{L^c}\big)$, $Q\in \Qc_{\Xc_{L^c}|\Xc_{L}}$, and the natural extension $\bar{f}$ of $f$ to the entire space $\Xc$, { defined by  $\bar{f}(x_{L}, x_{L^c})= f(x_{L^c}) $}. We obtain the chain of equalities:
\begin{multline*}
\rho_{\Xc_{L^c}|x_{L}}[f,Q] = \rho\big[ \bar{f}, \delta_{x_{L}}\smallotimes Q\big]
=\rho\big[ \bar{f}, \delta_{x_J} \smallotimes \delta_{x_K}\smallotimes Q\big]\\
=
\rho_{\Xc_{J^c}|x_J}\big[\bar{f}, \delta_{x_K}\smallotimes Q\big]=
\big(\rho_{\Xc_{J^c}|x_J}\big)_{\Xc_{L^c}|x_{K}}[f,Q].
\end{multline*}

Consider now the marginal risk form $\rho_{\Xc_L}$. Let $K = L \setminus J$ and $Q,Q' \in \Qc_{\Xc_{K}|\Xc_J}$. Suppose
\begin{equation}
\label{premise-marg}
\rho_{\Xc_L}[ Z, \delta_{x_{J}}\smallotimes Q] \le \rho_{\Xc_L}[ Z', \delta_{x_{J}}\smallotimes Q'], \quad \forall \, x_J\in \Xc_J,
\end{equation}
where $Z,Z' \in \Bb(\Xc_{L})$.
We can formally extend the functions $Z$ and $Z'$ to the entire domain $\Xc$ by setting
$\bar{Z}(x_L, x_{L^c}) = Z(x_{L})$ and $\bar{Z'}(x_L, x_{L^c}) = Z'(x_{L})$. We can also define
the kernels $\bar{Q}$ and $\bar{Q}'$ in $\Qc_{\Xc_{J^c}|\Xc_{J}}$ by setting
$\bar{Q}= Q\times M$ and $\bar{Q}'= Q'\times M$, where $M$ is an arbitrary kernel in $\Qc_{\Xc_{L^c}|\Xc_{J}}$. Since the functions
$\bar{Z}$ and $\bar{Z}'$ do not depend on $x_{L^c}$, the relations \eqref{premise-marg} can be equivalently written as
\[
\rho[ \bar{Z}, \delta_{x_{J}}\smallotimes \bar{Q}] \le \rho[ \bar{Z}', \delta_{x_{J}}\smallotimes \bar{Q}'], \quad \forall \, x_J\in \Xc_J.
\]
By the conditional consistency of $\rho$ with respect to $\Xc_J$,
\[
\rho[ \bar{Z}, \lambda\smallotimes \bar{Q}] \le \rho[ \bar{Z}', \lambda\smallotimes \bar{Q}'],\quad \forall \lambda\in \Pc(\Xc_J).
\]
Since $\bar{Z}$ and $\bar{Z}'$ do not depend on $x_{L^c}$, we conclude that
\[
\rho_{\Xc_{L}}[ Z, \lambda \smallotimes Q] \le \rho_{\Xc_L}[ Z', \lambda \smallotimes Q'],
\]
which proves the conditional consistency of the marginal risk  form $\rho_{\Xc_L}$.

It remains to verify  the tower property (iv). For  $f\in \Bb\big(\Xc_{J}\big)$, we define
\[
\underline{f}(x_J,x_{J^c})
=\underline{\bar{f}}(x_L)= f(x_J).
\]
From \eqref{r-def} and the tower property for marginal measures, we obtain the chain of equalities:
\[
\rho_{\Xc_J}\big[f,P_{\Xc_J}\big] = \rho[\underline{f},P] = \rho_{\Xc_L}\big[\underline{\bar{f}},P_{\Xc_L}\big]
 =
\big(\rho_{\Xc_L}\big)_{\Xc_J}\big[f,\big(P_{\Xc_L}\big)_{\Xc_J}\big]=\big(\rho_{\Xc_L}\big)_{\Xc_J}\big[f,P_{\Xc_J}\big],
\]
which is (iv).
 \qed
 \end{proof}

\section{Risk in Two-Stage Partially Observable Systems}
\label{s:two-stage-PO}

\subsection{Fixed Observation Distribution}

Let us start from the following simple setting.
A random vector $(X,Y)$ is distributed in the product of Polish spaces $\Xc \times \Yc$ according to a measure $P$.
For a bounded measurable function $c:\Xc\times \Yc \to \Rb$, we
can evaluate the risk of $c(X,Y)$ by a risk form $\rho[c,P]$.

However, we know that we shall be able to observe the value of $X$.
After $X$ is observed, we might refine our risk evaluation of $c(X,Y)$.
Thus, a question arises: if a future possibility to observe $X$
exists, what should be our present evaluation of the risk of $c(X,Y)$, \emph{before} $X$ is observed.
Note that $Y$ is never observed.

Let us start with the problem of risk evaluation {after} $X$ is observed.
We can disintegrate $P$ into its marginal $P_{\Xc}$ on $\Xc$ and a transition kernel $P_{\Yc|\Xc}$ from $\Xc$ to $\Pc(\Yc)$:
\[
P (dx, dy) =  P_{\Xc}(dx) P_{\Yc|\Xc}(dy|x) .
\]
Suppose the risk form $\rho$ is monotonic, normalized, has the translation property and the support property.
Then the correct evaluation of the risk after $X=x$ is observed
is
\[
\rho_{\Yc|x}\big[c(x,\cdot), P_{\Yc|\Xc}(x) \big] =  \rho\big[ c, \delta_x \smallotimes P_{\Yc|\Xc}\big].
\]
This is nothing else, but the conditional risk form defined in \eqref{rhox}. As a function of $x$,
we obtain the conditional risk operator $\rho_{\Yc|\Xc}\big[c, P_{\Yc|\Xc}\big]$.
Now, to evaluate the overall risk, we calculate
$\rho_{\Xc}\Big[\rho_{\Yc|\Xc}\big[c, P_{\Yc|\Xc}\big], P_{\Xc} \Big]$. We thus arrive to the following conclusion from
Theorem \ref{t:total-risk}.
If the risk form $\rho:\Bb(\Xc\times\Yc)\times\Pc(\Xc\times\Yc)\to \Rb$ is normalized, translation equivariant, {conditionally consistent}, and has the support property, then
\[
\rho[c,P] = \rho_{\Xc}\Big[ \rho_{\Yc|\Xc}\big[c, P_{\Yc|\Xc} \big], P_{\Xc} \Big].
\]
It follows that the two risk evaluations: without and with the perspective of inspection, are identical. The mere {existence} of
inspection does not affect risk.

When a possibility of control exists, the situation is different. Suppose there are two Polish spaces
$\Uc_1$ and $\Uc_2$, which we call \emph{control spaces}. At stage one, a control $u\in U_1 \subset \Uc_1$ is chosen, where $U_1$
is a subset of $\Uc_1$.
Then an observation
of $X$ is made.  After observing the value of
$X$, we choose control $u_2\in U_2(X,u_1) \subset\Uc_2$ to minimize the risk of $c(X,Y,u_1,u_2)$. The risk is measured by the form $\rho[\cdot,\cdot]$. Here $U_2: \Xc \times \Uc_1 \rightrightarrows \Uc_2$ is a measurable multifunction representing the feasible set at the second stage.
We shall use the symbol $\pi(\cdot)\lessdot U_2(\cdot, u_1)$ to indicate that the function $\pi$ is a measurable selection of  $U_2(\cdot, u_1)$.

We may look at this problem from two perspectives. Let us start from the functional perspective. Since $u_2$ can be chosen
after $X$ is observed, we may represent it as a measurable function: $u_2 = \pi(x)$, $x\in \Xc$. Therefore, the overall
cost has the form:
\begin{equation}
\label{Zupi}
Z^{u_1,\pi}(x,y) = c(x,y,u_1,\pi(x)), \quad (x,y)\in \Xc \times \Yc.
\end{equation}
The distribution of $(X,Y)$ is $P$. The problem takes on the form
\begin{equation}
\label{functional-simple}
\begin{aligned}
\min_{u_1,\pi}&\ \rho\big[ Z^{u_1,\pi}, P\big]\\
\text{s.t.} &\  u_1 \in U_1,\\
&\ \pi(\cdot) \lessdot U_2(\cdot, u_1).
\end{aligned}
\end{equation}

We now derive a two-stage representation of this problem.
\begin{theorem}
\label{t:two-stage-simple}
We assume the following:
\begin{tightlist}{(iii)}
\item
The risk form $\rho$ is  monotonic, normalized, translation equivariant, has the support property, and is {conditionally consistent};
\item
The multifunction $U_2$ is upper-semicontinuous and has nonempty and compact values;
\item
The function $c$ is uniformly bounded, measurable, and lower-semicontinuous with respect to its second argument.
\end{tightlist}
Then problem \eqref{functional-simple} is equivalent to the two-stage problem:
\begin{equation}
\label{first-stage-problem}
\min_{u_1\in U_1} \rho_{\Xc}\big[ V(\cdot,u_1),P_{\Xc}\big],
\end{equation}
where $V(\cdot,\cdot)$ is the optimal value of the following second stage problem:
\begin{equation}
\label{second-stage-problem}
V(x,u_1) = \min_{u_2\in U_2(x,u_1)}\rho_{\Yc|x}\big[c(x,\cdot,u_1,u_2),  P_{\Yc|\Xc}(x)\big],\quad x\in \Xc,\quad u_1 \in U_1. 
\end{equation}
\end{theorem}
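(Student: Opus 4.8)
The plan is to reduce the functional problem \eqref{functional-simple} to the two-stage form via the risk disintegration formula of Theorem~\ref{t:total-risk}, and then to interchange the minimization over second-stage policies with the monotone marginal form $\rho_{\Xc}$, the interchange being justified by a measurable-selection argument that rests on assumptions (ii)--(iii). Concretely, fix $u_1\in U_1$ and a measurable selection $\pi(\cdot)\lessdot U_2(\cdot,u_1)$. The cost $Z^{u_1,\pi}(x,y)=c(x,y,u_1,\pi(x))$ of \eqref{Zupi} lies in $\Bb(\Xc\times\Yc)$, because $c$ is uniformly bounded and measurable and $\pi$ is measurable. Assumption (i) lets me invoke Theorem~\ref{t:total-risk}, so
\[
\rho\big[Z^{u_1,\pi},P\big]=\rho_{\Xc}\big[\rho_{\Yc|\Xc}[Z^{u_1,\pi},P_{\Yc|\Xc}],P_{\Xc}\big].
\]
Setting $g^{u_1,\pi}(x):=\rho_{\Yc|\Xc}[Z^{u_1,\pi},P_{\Yc|\Xc}](x)=\rho_{\Yc|x}\big[c(x,\cdot,u_1,\pi(x)),P_{\Yc|\Xc}(x)\big]$, Section~\ref{s:disintegration} shows $g^{u_1,\pi}\in\Bb(\Xc)$, so it is an admissible argument of $\rho_{\Xc}$, and the value of \eqref{functional-simple} equals $\inf_{u_1\in U_1}\inf_{\pi\lessdot U_2(\cdot,u_1)}\rho_{\Xc}\big[g^{u_1,\pi},P_{\Xc}\big]$.

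Next I would show that for each fixed $u_1$ the inner minimization can be carried out pointwise:
\[
\inf_{\pi\lessdot U_2(\cdot,u_1)}\rho_{\Xc}\big[g^{u_1,\pi},P_{\Xc}\big]=\rho_{\Xc}\big[V(\cdot,u_1),P_{\Xc}\big],
\]
with $V$ as in \eqref{second-stage-problem}. The inequality ``$\ge$'' is immediate, since every admissible $\pi$ satisfies $g^{u_1,\pi}(x)\ge V(x,u_1)$ pointwise and $\rho_{\Xc}$ is monotonic (Theorem~\ref{t:total-risk}). For ``$\le$'' I need (a) that $V(\cdot,u_1)\in\Bb(\Xc)$, and (b) that for each $\varepsilon>0$ there is a measurable selection $\pi_\varepsilon\lessdot U_2(\cdot,u_1)$ with $g^{u_1,\pi_\varepsilon}(x)\le V(x,u_1)+\varepsilon$ for all $x$; then monotonicity and translation equivariance of $\rho_{\Xc}$ give $\rho_{\Xc}[g^{u_1,\pi_\varepsilon},P_{\Xc}]\le\rho_{\Xc}[V(\cdot,u_1),P_{\Xc}]+\varepsilon$, and letting $\varepsilon\downarrow0$ closes the gap (no $\varepsilon$ is needed if the minimum in \eqref{second-stage-problem} is attained by a measurable selector). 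Taking $\inf_{u_1\in U_1}$ on both sides then identifies the optimal values of \eqref{functional-simple} and \eqref{first-stage-problem}--\eqref{second-stage-problem}, and the construction also shows that optimal solutions correspond: an optimal $u_1^\ast$ of \eqref{first-stage-problem} together with a measurable minimizing selector of \eqref{second-stage-problem} at $u_1^\ast$ assembles into an optimal pair $(u_1^\ast,\pi^\ast)$ of \eqref{functional-simple}, and conversely.

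The main obstacle is precisely (a)--(b): measurability of $V(\cdot,u_1)$ and existence of ($\varepsilon$-)optimal measurable second-stage selectors. For fixed $u_1$ and $x$, the second-stage objective $u_2\mapsto\rho_{\Yc|x}\big[c(x,\cdot,u_1,u_2),P_{\Yc|\Xc}(x)\big]$ is lower semicontinuous (from the semicontinuity and boundedness of $c$ in assumption (iii), using the monotonicity of $\rho_{\Yc|x}$ together with a Fatou-type estimate), while $U_2(x,u_1)$ is nonempty and compact, so by Berge's theorem $V(x,u_1)$ is attained, and $V(\cdot,u_1)$ is bounded because $c$ is uniformly bounded. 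Joint measurability of $(x,u_2)\mapsto\rho_{\Yc|x}[c(x,\cdot,u_1,u_2),P_{\Yc|\Xc}(x)]$ in $x$ is inherited from $\rho_{\Yc|\Xc}$ being a measurable mapping into $\Bb(\Xc)$ (Section~\ref{s:disintegration}); combined with the upper semicontinuity and measurability of the multifunction $U_2(\cdot,u_1)$ (assumption (ii)), this lets me apply the Kuratowski--Ryll-Nardzewski / Filippov measurable selection theorem to produce the required measurable selector $\pi_\varepsilon$ and, as a by-product, the measurability of $x\mapsto V(x,u_1)$. This is the only step where assumptions (ii)--(iii) enter; the remainder is bookkeeping with the disintegration formula and the order properties of $\rho_{\Xc}$.
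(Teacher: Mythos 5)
Your proposal is correct and follows essentially the same route as the paper: disintegrate $\rho[Z^{u_1,\pi},P]$ via Theorem \ref{t:total-risk}, then use monotonicity of $\rho_{\Xc}$ to push the minimization over $\pi$ inside its argument, with Berge's theorem under assumptions (ii)--(iii) supplying measurability, boundedness, and attainment for the second-stage value function. Your explicit two-inequality argument with $\varepsilon$-optimal measurable selectors and translation equivariance is simply a more detailed justification of the interchange step that the paper states tersely.
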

\begin{proof}
Since $\rho[\cdot,\cdot]$ satisfies the assumptions of Theorem \ref{t:total-risk}, the risk of the function $Z^{u_1,\pi}$
can be calculated by the risk disintegration formula:
\begin{multline*}
\label{two-stage-functional}
\rho\big[ Z^{u_1,\pi}, P\big]
= \rho_{\Xc}\Big[ \rho_{\Yc|\Xc}\big[Z^{u_1,\pi}, P_{\Yc|\Xc} \big], P_{\Xc} \Big]\\
= \rho_{\Xc}\Big[ x \mapsto \rho_{\Yc|x}\big[c(x,\cdot,u_1,\pi(x)), P_{\Yc|\Xc}(x) \big], P_{\Xc} \Big].
\end{multline*}
Then problem \eqref{functional-simple} takes on the form:
\[
\min_{u_1,\pi} \rho_{\Xc}\Big[ x \mapsto \rho_{\Yc|x}\big[c(x,\cdot,u_1,\pi(x)), P_{\Yc|\Xc}(x) \big], P_{\Xc} \Big],
\]
subject to the same constraints.
Due to the monotonicity of the marginal risk form $\rho_{\Xc}$, the smaller the values of the function
$x \mapsto \rho_{\Yc|x}\big[c(x,\cdot,u_1,\pi(x))$, the smaller the value of $\rho_{\Xc}$. Since $u_2 = \pi(x)$ may depend on $x$,
we may carry out the minimization with respect to $u_2$ inside the argument of $\rho_{\Xc}$:
\begin{multline*}
\inf_{u_1\in U_1}\inf_{\pi(\cdot)\lessdot U_2(\cdot,u_1)} \rho_{\Xc}\Big[ x \mapsto \rho_{\Yc|x}\big[c(x,\cdot,u_1,\pi(x)), P_{\Yc|\Xc}(x) \big], P_{\Xc} \Big]\\
=\inf_{u_1\in U_1} \rho_{\Xc}\Big[ x \mapsto \inf_{u_2\in U_2(x,u_1)}\rho_{\Yc|x}\big[c(x,\cdot,u_1,u_2), P_{\Yc|\Xc}(x) \big], P_{\Xc} \Big].
\end{multline*}
The only condition for the validity of this transformation is the measurability and boundedness of the optimal value
function \eqref{second-stage-problem}. This follows from Berge's theorem (see, \emph{e.g.},  \cite[Th. 1.4.16]{aubin2009set}), which can be applied due to the assumptions
(ii) and (iii). In fact, they also guarantee that the optimal value function is lower semicontinuous with respect to
$u_1$.

We conclude that problem \eqref{functional-simple} reduces to the marginal risk optimization \eqref{first-stage-problem}.
\qed
\end{proof}
Theorem \ref{t:two-stage-simple}  provides us with the second perspective on the problem. It has a hierarchical structure, similar to its expected-value full information counterpart:
after $X=x$ is observed, the second stage problem \eqref{second-stage-problem} is to minimize the conditional risk.
Then, the first stage problem takes on the form of minimizing the marginal risk \eqref{first-stage-problem} of the
second-stage optimal value. The most important conclusion
is that the risk disintegration formula allows us to write the overall problem in a hierarchical structure. The extended two-stage
risk-averse model, which is introduced and analyzed in \cite{miller2011risk} (see also \cite{gulten2015two}) is a special case of this problem.

\subsection{Controlled Observation Distribution}

Now we consider a more complex situation. There are still two Polish control spaces
$\Uc_1$ and $\Uc_2$. However,  after a control $u_1\in U_1 \subset \Uc_1$ is chosen, the distribution of the observation
$X$ depends on $u_1$. The dependence is described by a controlled kernel $K:\Yc \times \Uc_1 \to \Pc(\Xc)$. After observing
$X$, we choose control $u_2\in U_2(X,u_1) \subset\Uc_2$ to minimize the risk of $c(X,Y,u_1,u_2)$. The risk is measured by the form $\rho[\cdot,\cdot]$.

Assume the same conditions on $U_1$, $U_2$ and $\rho$ as in the previous subsection.
Let $P_Y$ be the marginal distribution of $Y$. After the first decision $u_1$ will be chosen, the joint distribution of $(Y,X)$
will become
\[
M(u_1) = P_{\Yc} \smallotimes K(\cdot,u_1),
\]
that is, $M(dy,dx|u_1) = P_{\Yc}(dy)K(dx|y,u_1)$. Therefore, denoting the second stage decision by $u_2=\pi(x)$ (it may depend on $x$),
our problem is to find
\begin{equation}
\label{Bayes-full}
\begin{aligned}
\min_{u_1,\pi}&\  \rho\big[ Z^{u_1,\pi},M(u_1)\big],\\
\text{s.t.} &\  u_1 \in U_1,\\
&\ \pi(\cdot) \lessdot U_2(\cdot, u_1).
\end{aligned}
\end{equation}
where the function $Z^{u_1,\pi}(\cdot,\cdot)$ is given by \eqref{Zupi}.

Let us develop a two-stage version of the functional problem \eqref{Bayes-full}.
The marginal distribution of the observation result is
\[
M_{\Xc}(u_1) = \int_\Yc K(y,u_1)\;P_{\Yc}(dy)
\]
where the integral is understood in the weak sense. Since the space $\Yc$ is standard, the measure $M(u_1)$ can be disintegrated
into the marginal $M_{\Xc}(u_1)$ and a transition kernel $\varGamma: \Xc \times \Uc_1 \to \Pc(\Yc)$ as follows
\[
M(u_1) = M_{\Xc}(u_1) \smallotimes \varGamma(u_1),
\]
which reads $M(dx,dy|u_1) = M_{\Xc}(dx|u_1) \varGamma(dy|x,u_1)$. The transition kernel $\varGamma$ is called the \emph{Bayes operator}.
\begin{example}
\label{e:densities}
	Assume that the joint distribution $M(u_1)$ of $(X,Y)$ has a density $q(\cdot, \cdot \mid u_1)$ with respect to
a finite product measure  $\mu_{\Xc} \otimes \mu_{\Yc}$ on $\Xc \times \Yc$. Then the Bayes operator has the form
\[
\varGamma(A|x,u_1)  = \frac{\int_A \int_\Yc q(x', y' \mid u_1)\; M_{\Yc}(dy) \; \mu_{\Yc}(dy')}
{\int_\Yc \int_\Yc q(x', y' \mid u_1)\; M_{\Yc}(dy) \; \mu_{\Yc}(dy')},\quad \forall\, A \in \Bc(\Yc).
\]
If the formula above has a zero denominator, we can formally define $\varGamma(x,u_1)$ to be an arbitrarily selected distribution on $\Yc$. \hfill $\Box$
\end{example}

With the use of the Bayes operator, we can equivalently write problem \eqref{Bayes-full} as a two-stage problem.

\begin{theorem}
\label{t:two-stage-Bayes}
We assume the following:
\begin{tightlist}{(iii)}
\item
The risk form $\rho$ is  monotonic, normalized, translation equivariant, {conditionally consistent}, and has the
support property;
\item
The multifunction $U_2$ is upper-semicontinuous and has nonempty and compact values;
\item
The function $c$ is uniformly bounded, measurable, and lower-semicontinuous with respect to its second argument.
\end{tightlist}
Then problem \eqref{Bayes-full} is equivalent to the two-stage problem:
\[
\min_{u_1\in U_1} \rho_{\Xc}\big[ V(\cdot,u_1),P_{\Xc}\big],
\]
where $V(\cdot,\cdot)$ is the optimal value of the following second stage problem:
\[
V(x,u_1) = \min_{u_2\in U_2(x,u_1)}\rho_{\Yc|x}\big[c(x,\cdot,u_1,u_2), \varGamma(x,u_1) \big],\quad x\in \Xc,\quad u_1 \in U_1.
\]
\end{theorem}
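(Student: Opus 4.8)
The plan is to reduce this theorem to Theorem \ref{t:two-stage-simple} by identifying the right disintegration of the controlled measure $M(u_1)$. First I would fix $u_1\in U_1$ and observe that problem \eqref{Bayes-full} is literally an instance of problem \eqref{functional-simple}, but with the joint distribution of $(X,Y)$ being $M(u_1)$ rather than a fixed $P$; crucially, in the present setting $X$ is still the observed variable and $Y$ the unobserved one, so the roles of the two coordinate spaces are exactly as in Section \ref{s:two-stage-PO}. The only new ingredient is that we must disintegrate $M(u_1)$ with respect to the observed coordinate $\Xc$ first, which is precisely what the Bayes operator does: by the standardness of the Polish spaces, $M(u_1) = M_{\Xc}(u_1)\smallotimes \varGamma(\cdot,u_1)$, so the marginal on $\Xc$ is $M_{\Xc}(u_1)$ and the conditional kernel is $\varGamma(\cdot,u_1)$.

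Next I would apply Theorem \ref{t:total-risk} (the risk disintegration formula) to $\rho[Z^{u_1,\pi},M(u_1)]$ with this disintegration, obtaining
\[
\rho\big[Z^{u_1,\pi},M(u_1)\big] = \rho_{\Xc}\Big[x\mapsto \rho_{\Yc|x}\big[c(x,\cdot,u_1,\pi(x)),\varGamma(x,u_1)\big],\, M_{\Xc}(u_1)\Big].
\]
This requires that the hypotheses of Theorem \ref{t:total-risk} hold, which they do by assumption (i). Then, exactly as in the proof of Theorem \ref{t:two-stage-simple}, I would use the monotonicity of the marginal risk form $\rho_{\Xc}$ to push the minimization over $\pi$ inside the outer form — since $u_2=\pi(x)$ may depend on $x$ and $U_2(\cdot,u_1)$ constrains each value separately — to get
\[
\inf_{\pi(\cdot)\lessdot U_2(\cdot,u_1)}\rho\big[Z^{u_1,\pi},M(u_1)\big] = \rho_{\Xc}\Big[x\mapsto\inf_{u_2\in U_2(x,u_1)}\rho_{\Yc|x}\big[c(x,\cdot,u_1,u_2),\varGamma(x,u_1)\big],\, M_{\Xc}(u_1)\Big],
\]
and the measurability and boundedness of the inner optimal value function follow from Berge's theorem under assumptions (ii) and (iii). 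Taking the infimum over $u_1\in U_1$ on both sides and writing $P_{\Xc}:=M_{\Xc}(u_1)$, $V(x,u_1):=\min_{u_2\in U_2(x,u_1)}\rho_{\Yc|x}[c(x,\cdot,u_1,u_2),\varGamma(x,u_1)]$ yields the claimed two-stage form.

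The one point deserving care — and what I expect to be the main obstacle — is the interchange of $\inf_\pi$ and $\rho_{\Xc}$: one must argue that ranging over measurable selections $\pi\lessdot U_2(\cdot,u_1)$ attains (in the infimum sense) the pointwise infimum $x\mapsto\inf_{u_2\in U_2(x,u_1)}(\cdots)$. This is exactly the interchangeability argument used in Theorem \ref{t:two-stage-simple}, and it rests on the existence of $\varepsilon$-optimal measurable selections, guaranteed by Berge's theorem together with a measurable selection theorem applied to the upper-semicontinuous compact-valued multifunction $U_2(\cdot,u_1)$ and the lower-semicontinuous (in $u_2$) integrand. Since assumptions (i)–(iii) here coincide verbatim with those of Theorem \ref{t:two-stage-simple}, I would simply invoke that theorem's argument, noting only that $P$ is replaced by $M(u_1)$ and the conditional kernel $P_{\Yc|\Xc}$ by the Bayes operator $\varGamma(\cdot,u_1)$; the decision-dependence of the measure enters only through these substitutions and does not affect the structure of the proof.
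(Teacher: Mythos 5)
Your proposal follows essentially the same route as the paper's own proof: rewrite $\rho[Z^{u_1,\pi},M(u_1)]$ using the disintegration $M(u_1)=M_{\Xc}(u_1)\smallotimes\varGamma(\cdot,u_1)$ supplied by the Bayes operator, apply Theorem \ref{t:total-risk}, and then repeat the interchange-of-minimization argument of Theorem \ref{t:two-stage-simple} (monotonicity of $\rho_{\Xc}$ plus Berge's theorem and measurable selections) with $P_{\Yc|\Xc}$ replaced by $\varGamma(\cdot,u_1)$ and $P_{\Xc}$ by $M_{\Xc}(u_1)$. This matches the paper's argument in both structure and the justification of the one delicate step, so the proposal is correct.
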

\begin{proof}
With the use of the Bayes formula, we can write problem \eqref{Bayes-full} as follows:
\[
\min_{u_1\in U_1} \min_{\pi(\cdot)\lessdot U_2(\cdot,u_1)} \rho\big[ Z^{u_1,\pi},P_{\Xc}(u_1) \smallotimes \varGamma(u_1)\big].
\]
Since the risk form $\rho$ satisfies the assumptions of Theorem \ref{t:total-risk}, we can disintegrate it to obtain
the following equivalent form:
\[
\min_{u_1\in U_1} \min_{\pi(\cdot)\lessdot U_2(\cdot,u_1)}
 \rho_{\Xc}\Big[ x \mapsto \rho_{\Yc|x}\big[c(x,\cdot,u_1,\pi(x)), \varGamma(x,u_1) \big], P_{\Xc}(u_1) \Big].
\]
The remaining considerations are the same as in the proof of Theorem \ref{t:two-stage-simple}.
Due to the monotonicity of the marginal risk form $\rho_{\Xc}$, the smaller the values of the function
$x \mapsto \rho_{\Yc|x}\big[c(x,\cdot,u_1,\pi(x)), \varGamma(x,u_1) \big]$, the smaller the value of $\rho_{\Xc}$. Since $u_2 = \pi(x)$ may depend on $x$,
we may carry out the minimization with respect to $u_2$ inside the argument of $\rho_{\Xc}$:
\begin{multline*}
\inf_{u_1\in U_1} \inf_{\pi(\cdot)\lessdot U_2(\cdot,u_1)}\rho_{\Xc}\Big[ x \mapsto \rho_{\Yc|x}\big[c(x,\cdot,u_1,\pi(x)), \varGamma(x,u_1) \big], P_{\Xc}(u_1) \Big]\\
=\inf_{u_1\in U_1} \rho_{\Xc}\Big[ x \mapsto \inf_{u_2\in U_2(x,u_1)}\rho_{\Yc|x}\big[c(x,\cdot,u_1,u_2), \varGamma(x,u_1) \big], P_{\Xc}(u_1) \Big].
\end{multline*}
The only difference is that the new marginal distribution and the Bayes operator are the disintegration components of the
probability measure and feature in the risk disintegration formula. The ``$\inf$'' operation in the second stage problem can be replaced by ``$\min$'' because of  conditions (ii) and (iii).
\qed
\end{proof}

{

\section{Conclusions}

Our work initiates systematic research of risk measures considered as functionals of two arguments: a function on a space $\Xc$ and an underlying probability measure
on the $\sigma$-algebra of Borel subsets of $\Xc$.
Such functionals, which we call risk forms, occur in two- and multi-stage optimization models, in which the probability measure depends on decisions,
and in models, in which only partial observation is available and Bayesian updates of the probability measure are employed.

Two main results: the Kusuoka representation and the risk disintegration formula, generalize the classical properties of risk measures to the new setting.
The derivation of the dual and Kusuoka representations hinges on novel duality theory for functionals of quantile functions \cite{dentcheva2014risk}.
The risk disintegration formula uses a new concept of conditional consistency.

For both contributions, essential is the boundedness of the functions under consideration. It allows us to consider arbitrary probability measures
on the spaces involved. In the first group of results, duality between bounded functions and finitely additive measures plays a role; consistency with the increasing convex order allows to pass to countably additive measures. In the second group of results, the very concept of conditional consistency uses Dirac delta measures
(in products with conditional measures), which are natural in a space of bounded functions.

The theory of risk measures is well-developed in the spaces of integrable functions. A fixed underlying probability measure is essential
for defining the space of functions. A fundamental challenge is to extend the theory of risk forms to important classes of unbounded functions.
It may require to precisely define both domains of the risk form: a broader class of functions and a narrower class of probability measures, so that
properties of the forms can be preserved. In particular, one could conjecture that a generalized Kusuoka representation could be developed in such a broader
setting, but this requires further research.

}

\bibliographystyle{plain}

\end{document}